\theoremstyle{plain}
\newtheorem{Thm}{Theorem}[section]         
\newtheorem{theorem}{Theorem}
\newtheorem{Lemma}[Thm]{Lemma}
\newtheorem{Cor}[Thm]{Corollary}
\newtheorem{Prop}[Thm]{Proposition}
\theoremstyle{definition}
\newtheorem{Rem}[Thm]{Remark}  
\theoremstyle{remark}
\newtheorem{Ex}[Thm]{Example}
\numberwithin{equation}{section}
\begin{document}

\title{Nash multiplicities and isolated points of maximum multiplicity}
\author{B. Pascual-Escudero
\footnote{The author was supported by MTM2012-35849.\\
{\em Mathematics subject classification. 14E15, 14J17.}\\
\textit{Keywords:} Arc Spaces. Resolution of Singularities. Rees algebras}} 

\AtEndDocument{\bigskip{\footnotesize
  \textsc{Depto. Matem\'aticas,
Facultad de Ciencias, Universidad Aut\'onoma de Madrid 
and Instituto de Ciencias Matem\'aticas CSIC-UAM-UC3M-UCM, Cantoblanco 28049 Madrid, Spain} \par 
  \textit{E-mail address}, B. Pascual-Escudero: \texttt{beatriz.pascual.escudero@gmail.com} \par
}}

\date{\today}

\maketitle

\begin{abstract}
Let $X$ be an algebraic variety defined over a field of characteristic zero, and let $\xi \in \mathrm{\underline{Max}\; mult}(X)$ be a point in the closed subset of maximum multiplicity of $X$. We provide a criterion, given in terms of arcs, to determine whether $\xi $ is isolated in $\mathrm{\underline{Max}\; mult}(X)$. More precisely, we use invariants of arcs derived from the Nash multiplicity sequence to characterize when $\xi $ is an isolated point in $\mathrm{\underline{Max}\; mult}(X)$.
\end{abstract}

%\tableofcontents

\normalsize

\section*{Introduction}

In many situations, arc spaces are useful when one is looking for some information about the singularities of varieties. Let $X$ be an algebraic variety over a field $k$, and $\xi \in X$ a point. An arc $\varphi $ in $X=\mathrm{Spec}(B)$ through $\xi $ is a morphism
$$\varphi ^*:\mathrm{Spec}(K[[t]])\longrightarrow X$$
mapping the closed point to $\xi $ or, equivalently, a homomorphism of rings $\varphi :B\longrightarrow K[[t]]$ such that $\varphi (\mathcal{P}_{\xi })\subset (t)$, where $\mathcal{P}_{\xi }\subset B$ is the prime ideal defining $\xi $. The order of the arc $\varphi $, which we will denote by $\mathrm{ord}(\varphi )$, is the greatest positive integer $n$ such that $\varphi (\mathcal{P}_{\xi })\subset (t^n)$. Here $K$ may be any field extension of $k$. Some examples of the study of the connections between arcs and singularities can be found, for instance, in the works of Ein, Ishii, Musta\c{t}\u{a}, Reguera and Yasuda among others.

\subsection*{The Nash multiplicity sequence}

We will be interested in a sequence of positive integers attached to an arc at a given point, the so called Nash multiplicity sequence, which was defined by M. Lejeune-Jalabert in \cite{L-J} for hypersurfaces, and generalized later by M. Hickel in \cite{Hickel05}. This sequence can be constructed as follows. Let $\xi $ be a point in $X$ and let $\varphi $ be an arc through $\xi $. Also let $\Gamma _0=\varphi \times i:B\times K[t]\rightarrow K[[t]]$ be the graph of $\varphi $, which is additionally an arc in $X_0=X\times \mathbb{A}^1$ through $\xi _0=(\xi ,0)$. Consider the following sequence of blow ups $\pi _i$ at points:
\begin{equation}\label{eq:Nms}
\xymatrix@R=15pt@C=50pt{
\mathrm{Spec}(K[[t]]) \ar[dd]^{\Gamma _0^*} \ar[ddr]^{\Gamma _1^*} \ar[ddrrr]^{\Gamma _l^*} & & & & \\
& & & & \\
X_0=X\times \mathbb{A}^1 & X_1 \ar[l]_>>>>>{\pi _1} & \ldots \ar[l]_{\pi _2} & X_l \ar[l]_{\pi _l} & \ldots \ar[l]_{\pi _{l+1}} \\
\xi _0=(\xi ,0) & \xi _1 & \ldots & \xi _l 
}\end{equation}
Here $\pi _i $ is the blowup at $\xi _{i-1}$, where $\xi _{i}=\mathrm{Im}(\Gamma _{i}^*)\cap \pi _{i}^{-1}(\xi _{i-1})$ for $i=1,\ldots ,l,\ldots $, and $\Gamma _i$ is the arc in $X_i$ through $\xi _i$ obtained by lifting $\Gamma _0$. Thus, the sequence is constructed by blowing up at closed points selected by the arcs $\Gamma _i$ for $i\geq 0$. Let us recall that the multiplicity of $X$ at a point $\eta \in X$ is given by an upper semicontinuous function
\begin{align*}
\mathrm{mult} (X): X & \longrightarrow \mathbb{N} \\
\eta & \longmapsto \mathrm{mult} (X)(\eta )=\mathrm{mult}_{\eta }(X)=\mathrm{mult}(\mathcal{O}_{X,\eta })\mbox{,}
\end{align*}
where $\mathrm{mult}(\mathcal{O}_{X,\eta })$ stands for the multiplicity of the local ring $\mathcal{O}_{X,\eta }$ at the maximal ideal. Let us recall that this multiplicity can be computed as $a_d\cdot d!$, where $a_d$ is the coefficient of the highest order term in the Hilbert polynomial of $\mathcal{O}_{X,\eta }$ (see \cite[Ch. 11]{Hu_Sw}). From a geometrical point of view, if $X$ is defined over $\mathbb{C}$, then the multiplicty of $X$ at $\eta $ corresponds to the smallest of the ranks over the generic fiber when considering all possible local morphisms $(X,\eta )\rightarrow (\mathbb{C}^d,0)$. If we denote by $m_i$ the multiplicity of $X_i$ at $\xi _i$ for $i=0,\ldots ,l,\ldots $, then the Nash multiplicity sequence of $\varphi $ in $X$ at $\xi $ is the sequence of positive integers
$$m_0\geq m_1\geq \ldots \geq m_l=m_{l+1}=...\geq 1$$
(see \cite[Section 2.2]{Br_E_P-E} for the detailed construction).

\vspace{0.2cm}

When $X$ is a hypersurface, the Nash multiplicity sequence of an arc $\varphi \in \mathcal{L}(X)$ can be regarded as a refinement of the multiplicity of $X$ at $\xi :=\varphi (\langle t\rangle )$ in the following sense: On the one hand, the multiplicity function defines a stratification of $X$ into locally closed subsets, and the multiplicity of $X$ at a point corresponds to that of the stratum containing it. If $X$ is a hypersurface embedded in a smooth scheme $V$, this stratification is determined by the order of vanishing of the partial derivatives applied to a local equation $f$ defining $X$. On the other hand, consider the spaces of $i$-jets of $X$, which we shall denote by $\mathcal{L}_i(X)$ for $i\geq 0$, and the natural truncations from the arc space: $\pi _{X,i}:\mathcal{L}(X) \longrightarrow \mathcal{L}_i(X)$. Hence, $\pi _{X,i}(\mathcal{L}(X))\subset \mathcal{L}_i(X)$, for $i\geq 0$, is the subset of $i$-jets in $X$ which are the truncation of some arc in $X$. In \cite{L-J}, M. Lejeune-Jalabert proved that for each $i\geq 0$ there is a stratification of $\pi _{X,i}(\mathcal{L}(X))\subset \mathcal{L}_i(X)$ into disjoint locally closed subsets: $\pi _{X,i}(\mathcal{L}(X))=\cup _{1\leq \mu _i\leq \ldots \leq \mu _0}\mathcal{H}_{\mu _0,\ldots ,\mu _i}$. The Nash multiplicity sequence $(m_0,\ldots ,m_i,\ldots )$ attached to the arc $\varphi $ is determined by these stratifications: for $j\geq 0$, $\pi _{X,j}(\varphi )\in \mathcal{H}_{m_0,\ldots ,m_j}$. In particular, for $j=0$, $\pi _{X,0}(\mathcal{L}(X))=\cup _{1\leq \mu _0}\mathcal{H}_{\mu _0}\subset X$, corresponds to the stratification of $X$ given by the multiplicity, so $m_0$ is just the multiplicity of $X$ at $\xi $.

\vspace{0.2cm}

Throughout this paper, we use Hickel's approach since we state and prove our results for general varieties, not just hypersurfaces.

\vspace{0.2cm}

\subsection*{Our results}

We will be particularly interested in the study of Nash multiplicity sequences of arcs through points of maximum multiplicity of $X$. Let $m:=\mathrm{max\, mult}(X)$ be the maximum value achieved by $\mathrm{mult}(X)$.  We write
$$\mathrm{\underline{Max}\, mult}(X)=\left\{ \eta \in X:\mathrm{mult}_{\eta }(X)\geq m\right\} =\left\{ \eta \in X:\mathrm{mult}_{\eta }(X)=m\right\} $$
for the closed subset of the singular locus of $X$ consisting of the points of highest multiplicity. If $X$ is a reduced equidimensional scheme, then $X$ is regular if and only if the multiplicity equals one at every point (see \cite[Section 2.10]{Br_V2}). This is why the closed subset $\mathrm{\underline{Max}\, mult}(X)$ is an object of interest in resolution of singularities.

\vspace{0.2cm}

If $X$ is defined over a field $k$ of characteristic zero, one can define the order of contact of an arc $\varphi $ (through $\xi $) with $\mathrm{\underline{Max}\, mult}(X)$, 
$$r_{X,\varphi }\in \mathbb{Q}_{\geq 1}\mbox{.}$$
This order of contact is an invariant of the arc $\varphi $ at the point $\xi $ in $X$. This invariant can be computed as the order of a particular Rees algebra (see \cite[Section 3]{Br_E_P-E}). From $r_{X,\varphi }$, one can obtain, for instance, the number of blow ups as in (\ref{eq:Nms}) that are needed before the Nash multiplicity sequence decreases for the first time: 
$$\rho _{X,\varphi }:=\mathrm{min}_{i\in \mathbb{Z}_{>0}}\left\{ i:m_i<m_0 \right\} =\left[ r_{X,\varphi } \right]\mathrm{.}$$
We call $\rho _{X,\varphi }$ the persistance of $\varphi $ in $X$. In fact, sometimes it can be even more interesting to consider the quotient $\bar{r}_{X,\varphi }=\frac{r_{X,\varphi }}{\mathrm{ord}(\varphi )}$, which we refer to as the normalized order of contact of $\varphi $ with $\mathrm{\underline{Max}\, mult}(X)$, since in this way one gets rid of the influence of the order of the arc. For the same reason, it is convenient to define also the normalized persistance of $\varphi $ in $X$ as $\bar{\rho }_{X,\varphi }=\frac{\rho _{X,\varphi }}{\mathrm{ord}(\varphi )}$. Indeed, the set 
$$\Phi _{X,\xi }=\left\{ \bar{r}_{X,\varphi } \right\} _{\varphi }\subset \mathbb{Q}_{\geq 1}\mbox{,}$$
where $\varphi $ runs over all arcs in $X$ through $\xi $, is an invariant of $X$ at $\xi $. This invariant reflects information about $\xi $ coming from arcs. In particular, it uses information provided by the Nash multiplicity sequences of those arcs.  It was proven in \cite[Theorem 4.2.5]{Br_E_P-E} that this set has a minimum, which turns out to be the invariant $\mathrm{ord}_{\xi }^{(d)}(X)$, Hironaka's order in dimension $d$, which plays a key role in constructive resolution (see for example \cite{Hir1}, \cite{E_V97} and \cite[Sections 13 and 25]{Br_V2}).

\vspace{0.2cm}

In this work we try to understand better what the set $\Phi _{X,\xi }$ can tell us about the singularities of X. We prove that the supremum of $\Phi _{X,\xi }$ actually allows us to determine whether $\xi $ is an isolated point of $\mathrm{\underline{Max}\, mult}(X)$ or not:

\begin{theorem}\label{thm:main}(Main Theorem) 
Let $X$ be a variety over a field $k$ of characteristic zero, and let $\xi $ be a point in $\mathrm{\underline{Max}\, mult}(X)$. Then $\xi $ is an isolated point of $\mathrm{\underline{Max}\, mult}(X)$ if and only if the set $\Phi _{X,\xi }$ is upper bounded.
\end{theorem}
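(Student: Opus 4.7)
The proof would go through the translation of the statement into the language of Rees algebras developed in \cite{Br_E_P-E}. Locally at $\xi$ one embeds $X$ in a smooth ambient scheme $V$ and attaches to $X$ a Rees algebra $\mathcal{G}_X$ on $V$ whose singular locus agrees with $\mathrm{\underline{Max}\, mult}(X)$ near $\xi$ and for which $r_{X,\varphi} = \mathrm{ord}_\varphi(\mathcal{G}_X)$ for every arc $\varphi$ through $\xi$. Under this dictionary, upper-boundedness of $\Phi_{X,\xi}$ becomes the question of whether $\mathrm{ord}_\varphi(\mathcal{G}_X)/\mathrm{ord}(\varphi)$ admits a uniform bound as $\varphi$ varies over arcs through $\xi$.

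For the easy direction, ``$\xi$ not isolated $\Rightarrow \Phi_{X,\xi}$ unbounded'', I would produce arcs whose normalized contact order grows without bound. If $\xi$ is not isolated, there is an irreducible curve $C \subset \mathrm{\underline{Max}\, mult}(X) = \mathrm{Sing}(\mathcal{G}_X)$ through $\xi$. A formal branch $\gamma$ of $C$ at $\xi$ sits entirely inside $\mathrm{Sing}(\mathcal{G}_X)$, so every generator $(f_i,n_i)$ of $\mathcal{G}_X$ pulls back to $0$ under $\gamma$. To keep values rational, I would perturb $\gamma$ by arcs that remain tangent to $C$ up to order $n$ but leave $\mathrm{\underline{Max}\, mult}(X)$ thereafter; the resulting arcs $\varphi_n$ satisfy $r_{X,\varphi_n}\to\infty$ while $\mathrm{ord}(\varphi_n) = \mathrm{ord}(\gamma)$ is fixed, hence $\bar r_{X,\varphi_n}\to\infty$.

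For the hard direction, ``$\xi$ isolated $\Rightarrow \Phi_{X,\xi}$ bounded'', the key input is a Nullstellensatz-type statement for $\mathcal{G}_X$: if $\mathrm{Sing}(\mathcal{G}_X)=\{\xi\}$ locally, then for some sufficiently divisible weight $N$ the degree-$N$ component $I_N \subseteq \mathcal{O}_{V,\xi}$ of $\mathcal{G}_X$ satisfies $\sqrt{I_N} = \mathfrak{m}_\xi$, so $\mathfrak{m}_\xi^M \subseteq I_N$ for some $M \geq 1$. Given any arc $\varphi$ through $\xi$, the inclusion $\mathfrak{m}_\xi^M\subseteq I_N$ gives $\mathrm{ord}_t(\varphi(I_N)) \leq \mathrm{ord}_t(\varphi(\mathfrak{m}_\xi^M)) = M\cdot\mathrm{ord}(\varphi)$, and since $I_N\cdot W^N\subseteq \mathcal{G}_X$ one obtains
\[
r_{X,\varphi} \;=\; \mathrm{ord}_\varphi(\mathcal{G}_X) \;\leq\; \frac{\mathrm{ord}_t(\varphi(I_N))}{N} \;\leq\; \frac{M}{N}\,\mathrm{ord}(\varphi),
\]
so $\bar r_{X,\varphi}\leq M/N$ uniformly in $\varphi$, and $\Phi_{X,\xi}$ is bounded above by $M/N$.

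The main obstacle is exactly the passage from the set-theoretic assumption $\mathrm{Sing}(\mathcal{G}_X) = \{\xi\}$ to the ideal-theoretic statement $\sqrt{I_N} = \mathfrak{m}_\xi$. One has to choose $N$ large enough (and divisible enough by the weights of a chosen set of generators) that the degree-$N$ piece of $\mathcal{G}_X$ captures the full singular locus scheme-theoretically and not merely set-theoretically up to a larger closed set. This is a standard but delicate ingredient of the Rees-algebra formalism, and it is exactly where the isolation hypothesis is used. Once it is established, the bound $M/N$ follows immediately from the definitions, and the converse direction reduces to the short geometric construction of arcs tangent to a curve in $\mathrm{\underline{Max}\, mult}(X)$ through $\xi$.
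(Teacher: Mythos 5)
Your overall strategy matches the paper's: translate into Rees-algebra language, bound $\bar r_{X,\varphi}$ when $\mathrm{\underline{Max}\,mult}(X)$ is zero-dimensional, and produce a family of arcs with unbounded $\bar r$ when it contains a curve. However, both halves of your sketch contain a genuine gap.

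For the implication ``$\xi$ isolated $\Rightarrow \Phi_{X,\xi}$ bounded'', you flag the step $\sqrt{I_N}=\mathfrak{m}_\xi$ for suitably divisible $N$ as ``standard but delicate'' and hope that choosing $N$ large enough will capture the singular locus scheme-theoretically. This does not work for a general Rees algebra: for $\mathcal{G}=R[fW^2]$ with $\nu_\xi(f)=2$, one has $V(I_N)=V(f)$ for \emph{every} $N$, which can be strictly larger than $\mathrm{Sing}(\mathcal{G})=\{\eta:\nu_\eta(f)\geq 2\}$. The obstacle disappears entirely once you use the \emph{differentially closed} algebra $\mathcal{G}_X^{(n)}$ representing the multiplicity (as the paper does), because then $\mathrm{Sing}(\mathcal{G}_X^{(n)})=V(I_1)$ and $N=1$ already works; from $\{\xi\}=V(I_1)$ one immediately gets a monomial ideal in some regular system of parameters inside $I_1$, and the bound $\bar r_{X,\varphi}\leq \max_j a_j$ follows exactly as you then compute. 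So your argument becomes correct only after invoking differential closure, which you do not do; without it the key inclusion is false.

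For the implication ``$\xi$ not isolated $\Rightarrow \Phi_{X,\xi}$ unbounded'' (which the paper considers the hard direction), your idea of perturbing a formal branch $\gamma$ of a curve $C\subset\mathrm{\underline{Max}\,mult}(X)$ so that the perturbed arcs $\varphi_n$ agree with $\gamma$ up to order $n$ is sound as far as it goes, but the proposal does not explain why such perturbations can be chosen to lie \emph{in} $X$ rather than merely in the ambient smooth scheme, which is the crux. The paper circumvents this by projecting to $V^{(d)}$ via the finite transversal morphism $\beta_X$, constructing arcs $\bar\varphi_N^{(d)}$ in $V^{(d)}$ with $\bar\varphi_N^{(d)}(J)=t^N$ and $\bar\varphi_N^{(d)}(\mathcal{M}_{\xi^{(d)}})=t$, and lifting them to $X$ through a prime $\mathcal{Q}\subset\mathcal{O}_{X,\xi}$ lying over $\mathcal{P}=\mathrm{Ker}(\bar\varphi_N^{(d)})$. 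The computation of $\bar r_{X,\varphi_N}\geq N$ then uses only the kernel of the projected arc, which is controlled. Moreover the paper must handle separately the case where $C$ is not smooth at $\xi$, via an embedded desingularization of $C$ followed by the same lifting; your sketch tacitly assumes a smooth branch. These two missing ingredients --- the lifting of arcs along the finite projection, and the reduction of a singular curve to the smooth case --- constitute the real content of this direction, and they are not present in the proposal.
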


In terms of the Nash multiplicity sequence it will mean (Corollary \ref{cor:persistance}) that, whenever $\xi $ is an isolated point of $\mathrm{\underline{Max}\, mult}(X)$, then no arc through $\xi $ can be found so that its normalized persistance in $X$ is higher than a given integer (depending on $X$ and $\xi $). On the other hand, if $\xi $ belongs to a component of $\mathrm{\underline{Max}\, mult}(X)$ of dimension $1$ or more, then there is no bound for how big $\bar{\rho }_{X,\varphi }$ will be for some arcs.

\vspace{0.2cm}

In the last section of this paper, we present an additional condition over $X$ and isolated points of $\mathrm{\underline{Max}\, mult}(X)$ under which the supremum of $\Phi _{X,\xi }$ can be computed (Proposition \ref{thm:upper_bound_bigtau}). As we will see, this condition is also related to an invariant of constructive resolution of singularities: the $\tau $ invariant (see \cite{B}). We will also show some illustrative examples there.

\vspace{0.2cm}

\textbf{Acknowledgments: }The author is very grateful to A. Bravo and S. Encinas for their guide and suggestions, to S. Ishii and L. Narv\'aez for fruitful conversations, and to J. M. Conde-Alonso for his suggestions for the shaping of this paper.

\section{The order of contact of $\varphi $ with $\mathrm{\underline{Max}\, mult}(X)$}\label{sec:notation}

In what follows, we will assume $X$ to be an algebraic variety of dimension $d$ over a field $k$ of characteristic zero such that $\mathrm{max\, mult}(X)=b$, and $\xi $ to be a point in $\mathrm{\underline{Max}\, mult}(X)$, which for simplicity we will assume to be closed. The notation used in this section will be the standard one through the rest of the paper. Details about Rees algebras in resolution and basic results used here can be found, for instance, in \cite{E_V}.

\vspace{0.2cm}

Let $R$ be a regular ring which is of finite type over $k$. For us, a Rees algebra over $R$ (or over $V=\mathrm{Spec}(R)$) is a graded ring $\mathcal{G}=\oplus _{i\in \mathbb{Z}_{\geq 0}}I_iW^i\subset R[W]$ with $I_0=R$, which is finitely generated as an $R$-algebra. Note that this definition is more general than the (usual) one considering only algebras of the form $R[IW]$ for some ideal $I\subset R$. The singular locus of $\mathcal{G}$, $\mathrm{Sing}(\mathcal{G})$, is the subset of $V$ composed by the points $\eta $ of $\mathrm{Spec}(R)$ for which $\nu _{\eta }(f)\geq i$ for all $fW^i\in \mathcal{G}$, where $\nu _{\eta }(f)$ denotes the order of $f$ in the regular local ring $R_{\eta }$. It can be shown that  $\mathrm{Sing}(\mathcal{G})$ is a closed subset of $V$. By the order of an element $fW^i\in\mathcal{G}$ at a point $\eta \in \mathrm{Sing}(\mathcal{G})$, we mean the quotient $\frac{\nu _{\eta }(f)}{i}=:\mathrm{ord}_{\eta }(fW^i)$. The order of $\mathcal{G}$ at $\eta \in \mathrm{Sing}(\mathcal{G})$ is defined as $\mathrm{ord}_{\eta }(\mathcal{G})=\mathrm{inf}_{fW^i\in \mathcal{G}}\left\{ \mathrm{ord}_{\eta }(fW^i)\right\} $, and can actually be computed as $\mathrm{ord}_{\eta }(\mathcal{G})=\mathrm{min}\left\{ \mathrm{ord}_{\eta }(fW^i)\right\} $, where the minimum runs over a finite set of generators of $\mathcal{G}$.

\vspace{0.2cm}

We say that a Rees algebra $\mathcal{G}$ over $R$ represents the multiplicity of $X\hookrightarrow V=\mathrm{Spec}(R)$ at $\xi \in \mathrm{\underline{Max}\, mult}(X)$ if
$$\mathrm{Sing}(\mathcal{G})=\mathrm{\underline{Max}\, mult}(X)$$
locally in a neighborhood of $\xi $, and this condition is stable under sequences of permissible transformations for $\mathcal{G}$, that is, after any sequence of the form
\begin{equation}\label{eq:seq_perm}
\xymatrix@R=0.3pc@C=5pt{
V & = & V_0 & & & V_1 \ar[lll]_{\pi _1} & & & \ldots \ar[lll]_{\pi _2} & & & V_l \ar[lll]_{\pi _l}\\
\mathcal{G} & = & \mathcal{G}_0 & & & \mathcal{G}_1 \ar[lll] & & & \ldots \ar[lll] & & & \mathcal{G}_l \ar[lll]
}
\end{equation}
where each $\pi _j$ is either a smooth morphism or a blow up with center a regular closed subset of $\mathrm{Sing}(\mathcal{G}_{j-1})$ for $j=1,\ldots ,l$, as long as $\mathrm{max\, mult}(X_l)=\mathrm{max\, mult}(X)$ if $X_l$ is the strict transform of $X_{l-1}$ in $V_l$ whenever $\pi _l$ is a blow up, and the pullback if it is a smooth morphism. Here, the transform $\mathcal{G}_j$ of $\mathcal{G}_{j-1}$, for $j=1,\ldots ,l$, is  
\begin{equation}\label{eq:trans_law_RA}
\mathcal{G}_j=\oplus _{i\geq 0}I_{i,j}W^i\mbox{\; where \; }I_{i,j}=I_{i,j-1}\mathcal{O}_{V_j}\cdot I(E_j)^{-i}
\end{equation}
for any $i\geq 0$, being $E_j$ the exceptional divisor of $\pi _j$. By being stable, we mean that
$$\mathrm{Sing}(\mathcal{G}_{j})=\mathrm{\underline{Max}\, mult}(X_j)$$
if $X_j$ is the strict transform (or pullback, as corresponds) of $X_{j-1}$ in $V_{j}$.

\vspace{0.2cm}

This justifies a notion of resolution of a Rees algebra. A resolution of $\mathcal{G}$ is a sequence as in (\ref{eq:seq_perm}) where the $\pi _i$ are blow ups at regular closed subsets of $\mathrm{Sing}(\mathcal{G}_{l})$, and such that $\mathrm{Sing}(\mathcal{G}_{l})=\mathrm{\underline{Max}\, mult}(X_l)=\emptyset$ (see \cite[Sections 1.1 to 1.3]{Br_E_P-E}).\\

\begin{Rem}
Given $X$ and $\xi $, there is not a unique $\mathcal{O}_{V}$-Rees algebra $\mathcal{G}$ representing the multiplicity of $X$ at $\xi $. However, it can be shown that all Rees algebras representing the maximum multiplicity of $X$ at $\xi $ are somehow equivalent: they all undergo the same resolution, and they share the same order at any point of $\mathrm{\underline{Max}\, mult}(X)$. This is the case, for instance, of the differential closure\footnote{A Rees algebra $\mathcal{G}=\oplus _{i\geq 0}I_iW^i$ is differentially closed if, for any differential opperator $D$ of order $l$, $0\leq l\leq i$, we have $D(I_i)\subset I_{i-l}$  (see \cite{V3}).} of any Rees algebra $\mathcal{G}_X$ which represents the maximum multiplicity of $X$ at $\xi $. For details about these facts, see \cite{Br_G-E_V}.
\end{Rem}

Let us suppose, for simplicity, that $X=\mathrm{Spec}(B)$ is affine. Otherwise, since we will work locally, it is enough to consider open affine subsets of $X$. It is possible to find a local \'etale immersion $X\hookrightarrow \mathrm{Spec}(R)$ into a regular scheme of dimension $n>d$ and a Rees algebra $\mathcal{G}$ over $R$, representing the multiplicity of $X$ locally in a neighborhood of $\xi $ (see \cite{V}). Under these hypotheses, we have a regular $k$-algebra $S$ of dimension $d$ and a projection $\beta :\mathrm{Spec}(R)\longrightarrow V^{(d)}=\mathrm{Spec}(S)$ inducing a finite projection
$$\beta _X:X\longrightarrow V^{(d)}=\mathrm{Spec}(S)$$
of generic rank $b$ which is also transversal for $\mathcal{G}$, that is, $\mathrm{Ker}(d\beta )$ intersects the tangent space of $\mathcal{G}$ at $\xi $ (see \cite[16.1]{Br_V2}) only at $0$, $d\beta $ being the morphism induced by $\beta $ between the tangent spaces. This projection induces a homeomorphism between $\mathrm{\underline{Max}\, mult}(X)$ and its image (see \cite[Apendix A]{Br_V2}) and an injective finite morphism of the form
$$S\longrightarrow B\cong S[x_1,\ldots ,x_{n-d}]/I(X)=S[\overline{x}_1,\ldots ,\overline{x}_{n-d}]\mbox{.}$$
We obtain in this manner a local immersion of $X$ in a smooth $n$-dimensional space $V^{(n)}=\mathrm{Spec}(R)$ in a neighborhood of $\xi $, where $R=S[x_1,\ldots ,x_{n-d}]$. There exist $f_1,\ldots ,f_{n-d}\in I(X)\subset R$ such that for certain $b_1,\ldots ,b_{n-d}\in \mathbb{Z}_{>0}$, the Rees algebra
\begin{equation}
\mathcal{G}=R[f_1W^{b_1},\ldots ,f_{n-d}W^{b_{n-d}}]
\end{equation}
represents the multiplicity of $X$ locally in a neighborhood of $\xi $, and moreover, $f_i$ is the minimal polynomial of $\overline{x}_i$ over $S$, and hence it is a monic polynomial in $x_i$ with coefficients in $S$, for $i=1,\ldots ,n-d$ (see \cite{V} for the result on the existence and construction of such a presentation, and \cite{Br_E_P-E} for notation as used here). Each $f_i$ defines a hypersurface $X_i$ in $\mathrm{Spec}(S[x_i])$. Assume that we choose the differentially closed algebra
\begin{equation}\label{eq:pres_mult}
\mathcal{G}_X^{(n)}=\mathrm{Diff}(R[f_1W^{b_1},\ldots ,f_{n-d}W^{b_{n-d}}])\mbox{,}
\end{equation}
which also represents the multiplicity of $X$ locally in a neighborhood of $\xi $. We can suppose that the maximal ideal $\mathcal{M}_{\xi }$ of $\xi $ in $R$ is given by $<x_1,\ldots ,x_{n-d},z_1,\ldots ,z_d>$ for a regular system of parameters $\left\{ z_1,\ldots ,z_d\right\} $ in $S$ (see \cite[Section 4]{Br_V2}). The image $\xi ^{(d)}$ of $\xi $ by $\beta _X$ is then defined by the maximal ideal $\mathcal{M}_{\xi ^{(d)}}=<z_1,\ldots ,z_d>$. Note that $R\longrightarrow B$ is surjective, and for any $i=1,\ldots ,n-d$ the following diagram commutes
\begin{equation}\label{diag:fact_hyp}
\xymatrix{
\mathcal{G}_X^{(n)} & R=S[x_1,\ldots ,x_{n-d}] \ar[r] & S[x_1,\ldots ,x_{n-d}]/(f_1,\ldots ,f_{n-d}) \ar[r] & B \ar[r] & 0\\ 
\mathcal{G}_{X_i}^{(d+1)} & S[x_i] \ar[u] \ar[r] & S[x_i]/(f_i) \ar[u] \ar[ur]_{\beta _{X_i}^*}& & \\
\mathcal{G}_X^{(d)}\supset \mathcal{G}_{X_i}^{(d)} & S \ar[u] \ar[ur] & & & \\
}
\end{equation}
The homomorphism $S\longrightarrow R$ happens to induce an elimination 
$$\beta :V^{(n)}=\mathrm{Spec}(R)\longrightarrow V^{(d)}=\mathrm{Spec}(S)$$
for $\mathcal{G}_X^{(n)}$ (see \cite{V}), that is, a transversal admissible projection, defining a homeomorphism between $\mathrm{Sing}(\mathcal{G}_X^{(n)})\subset V^{(n)}$ and $\beta (\mathrm{Sing}(\mathcal{G}_X^{(n)}))\subset V^{(d)}$, and such that $\mathcal{G}_X^{(d)}=\mathcal{G}_X^{(n)}\cap S$ represents the multiplicity of $\beta (X)$ (see \cite{Br_V}, \cite{Br_V1}, \cite[16 and Appendix A]{Br_V2} and \cite[Theorem 4.11 and Theorem 4.13]{V07} for properties and results on elimination). Such an elimination is equivalent to the possibility of reducing a problem of Resolution of Rees algebras in dimension $n$ to a problem of Resolution of Rees algebras in dimension $d<n$. Since $\mathcal{G}_{X}^{(n)}$ is differentially closed, by means of the elimination via $\beta $, we have the following description for $\mathcal{G}_{X}^{(n)}$ (see \cite[Example 1.5.4 and Example 1.5.15]{Br_E_P-E}):
\begin{equation}\label{eq:Gn_simple}
\mathcal{G}_X^{(n)}=S[x_1][x_1W]\odot \ldots \odot S[x_{n-d}][x_{n-d}W]\odot \mathcal{G}_X^{(d)}\mbox{,}
\end{equation}
where we have used the fact that $f_i$ is a monic polynomial in $x_i$ for $i=1,\ldots ,n-d$, and $\mathcal{G}\odot \mathcal{H}$ denotes the smallest Rees algebra containing both $\mathcal{G}$ and $\mathcal{H}$. Furthermore, one may consider, for each $i\in \left\{ 1,\ldots ,n-d \right\} $, the hypersurface $X_i\subset \mathrm{Spec}(S[x_i])$ defined by $f_i$, as it was done in \cite[Section 4.2.1]{Br_E_P-E}. For each $i$, $\mathcal{G}_{X_i}^{(d+1)}=S[x_i][f_iW^{b_i}]$ represents the maximum multiplicity of $X_i$ locally at $\beta _{X_i}(\xi )$. Then, $\mathcal{G}_X^{(n)}$ can be written as
\begin{equation}\label{eq:GcomoHypersups}
\mathcal{G}_X^{(n)}=\mathcal{G}_{X_1}^{(d+1)}\odot \ldots \odot \mathcal{G}_{X_{n-d}}^{(d+1)}\mbox{,}
\end{equation}
and
\begin{equation}\label{eq:elim_hypers}
\mathcal{G}_X^{(d)}=\mathcal{G}_{X_1}^{(d)}\odot \ldots \odot \mathcal{G}_{X_{n-d}}^{(d)}\mbox{.}
\end{equation}
Let $\varphi $ be an arc in $X$ through $\xi $ which is not contained in $\mathrm{\underline{Max}\, mult}(X)$. We may project $\varphi $ to an arc $\varphi ^{(d)}$ in $V^{(d)}$ through $\xi ^{(d)}$ via $\beta _X$, that is: $\varphi ^{(d)}=\varphi \circ \beta _X^*$. We obtain a commutative diagram
\begin{equation*}
\xymatrix@R=0.3pc@C=5pt{
\mathcal{O}_{X,\xi } \ar[rrrd]^{\varphi } & & & \\ 
 & & & K[[t]] \\
\mathcal{O}_{V^{(d)},\xi ^{(d)}} \ar[uu]^{\beta _X^*} \ar[rrru]^{\varphi ^{(d)}} & & & \\
}
\end{equation*}
In particular, note that $\varphi (\mathcal{M}_{\xi })\supset \varphi ^{(d)}(\mathcal{M}_{\xi ^{(d)}})$, so
\begin{equation}\label{eq:orders_arcs}
\mathrm{ord}(\varphi )=\mathrm{ord}_t(\varphi (\mathcal{M}_{\xi }))\leq \mathrm{ord}_t(\varphi ^{(d)}(\mathcal{M}_{\xi ^{(d)}}))=\mathrm{ord}(\varphi ^{(d)})\mbox{.}
\end{equation}
Along this paper, we will repeatedly define arcs through regular systems of parameters. For instance, to define an arc in $V^{(d)}$ through $\xi ^{(d)}$, we will do it by giving the images of a r.s.p. $\left\{ y_1,\ldots ,y_d\right\} \subset \mathcal{O}_{V^{(d)},\xi ^{(d)}}$ by the arc. The fact that this description determines the arc completely is a consequence of the continuity of the completion map $\mathcal{O}_{V^{(d)},\xi ^{(d)}}\longrightarrow \widehat{\mathcal{O}_{V^{(d)},\xi ^{(d)}}}$ of $\mathcal{O}_{V^{(d)},\xi ^{(d)}}$ at $\xi ^{(d)}$, which allows us to define $\varphi $ via a map $\widehat{\mathcal{O}_{V^{(d)},\xi ^{(d)}}}\cong K[[y_1,\ldots ,y_d]]\longrightarrow K[[t]]$. This map induces an arc in $V^{(d)}$ through $\xi ^{(d)}$, if $K$ is the residue field of $\mathcal{O}_{V^{(d)},\xi ^{(d)}}$.

\vspace{0.2cm}

We denote by $\varphi (\mathcal{G}_X^{(n)})$ the Rees algebra over $K[[t]]$ generated by the images of the $f_i$ in (\ref{eq:pres_mult}) by $\varphi $ with their respective weights. That is, 
$$\varphi (\mathcal{G}_X^{(n)})=K[[t]][\varphi (f_1)W^{b_1},\ldots ,\varphi (f_{n-d})W^{b_{n-d}}]\mbox{.}$$

Given $X$, $\xi \in \mathrm{\underline{Max}\, mult}(X)$ and an arc $\varphi $ in $X$ through $\xi $, the \textit{order of contact of $\varphi $ with $\mathrm{\underline{Max}\, mult}(X)$}, denoted by $r_{X,\varphi }$, is defined as the order of the Rees algebra $\varphi (\mathcal{G}_X^{(n)})$. The quotient $\bar{r}_{X,\varphi }=\frac{r_{X,\varphi }}{\mathrm{ord}(\varphi )}$ gives a more interesting version of this invariant because it avoids the influence of the order of the arc (see \cite[Section 3.2]{Br_E_P-E}).

\vspace{0.2cm}

In order to express $\varphi (\mathcal{G}_X^{(n)})$ by means of the decomposition in (\ref{eq:GcomoHypersups}), we may consider the projections of $\varphi $ over the $X_i$, that we shall denote by $\varphi ^{(d+1)}_i$, and which are actually arcs in the corresponding $X_i$ through $\beta _{X_i}(\xi )$, because $f_i\in I(X)$ for $i=1,\ldots ,n-d$: 
\begin{equation}\label{diag:fact_hyp_arcs}
\xymatrix@R=30pt@C=70pt{
B \ar[dr]^{\varphi } & \\ 
S[x_i]/(f_i) \ar[u]^{\beta _{X_i}^*} \ar[r]^{\varphi _i^{(d+1)}=\varphi \circ \beta _{X_i}^*} & K[[t]]  \\
S \ar[u] \ar@/^2.5pc/[uu]^{\beta _X^*} \ar[ur]_{\varphi _i^{(d)}=\varphi \circ \beta _X^*=\varphi ^{(d)}} &  \\
}
\end{equation}

The following Lemma shows how $r_{X,\varphi }$ can be computed using the expressions in (\ref{eq:Gn_simple}) and (\ref{eq:GcomoHypersups}). 
\vspace{0.5cm}
\begin{Lemma}\cite[cf. Section 4]{Br_E_P-E}\label{lemma:x_1_not_important}
Let $X$ be as in the beginning of this section, and let $\xi $ be a point in $\mathrm{\underline{Max}\, mult}(X)$. Let $\varphi $ be an arc in $X$ through $\xi $. Then
\begin{enumerate}
	\item $r_{X,\varphi }=\mathrm{ord}_{t}(\varphi ^{(d)}(\mathcal{G}_X^{(d)}))$ and
	\item $\mathrm{ord}_t(\varphi (x_i))\geq \mathrm{ord} _{t}(\varphi ^{(d)}(\mathcal{G}_X^{(d)}))$ for $i=1,\ldots ,n-d$.
\end{enumerate}
\end{Lemma}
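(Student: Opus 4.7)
My plan is to establish assertion (2) first, and then deduce (1) as a direct consequence. Using decomposition \eqref{eq:Gn_simple},
$$\mathcal{G}_X^{(n)}=S[x_1][x_1W]\odot\cdots\odot S[x_{n-d}][x_{n-d}W]\odot\mathcal{G}_X^{(d)},$$
the Rees algebra $\mathcal{G}_X^{(n)}$ is generated over $R$ by $\{x_iW\}_{i=1}^{n-d}$ together with a generating set of $\mathcal{G}_X^{(d)}$. Because $\varphi^{(d)}=\varphi|_S$, the image $\varphi(\mathcal{G}_X^{(n)})$ is generated over $K[[t]]$ by the corresponding elements $\{\varphi(x_i)W\}_{i=1}^{n-d}$ together with $\varphi^{(d)}(\mathcal{G}_X^{(d)})$. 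Since the order of a Rees algebra can be computed as the minimum of the weighted orders of any generating set, we obtain
$$r_{X,\varphi}=\min\Bigl\{\mathrm{ord}_t(\varphi(x_1)),\ldots,\mathrm{ord}_t(\varphi(x_{n-d})),\,\mathrm{ord}_t(\varphi^{(d)}(\mathcal{G}_X^{(d)}))\Bigr\}.$$
Consequently, (1) is equivalent to the statement that this minimum is attained at the last entry, which is exactly (2).

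To prove (2), I fix $i$ and exploit the monicness of $f_i$ in $x_i$. Write $f_i=x_i^{b_i}+\sum_{j=1}^{b_i}a_{i,j}\,x_i^{b_i-j}$ with $a_{i,j}\in S$. Since $f_i\in I(X)$ and $\varphi$ factors through $B=R/I(X)$, applying $\varphi$ yields
$$\varphi(x_i)^{b_i}=-\sum_{j=1}^{b_i}\varphi^{(d)}(a_{i,j})\,\varphi(x_i)^{b_i-j}\qquad\text{in }K[[t]].$$
Writing $u_i=\mathrm{ord}_t(\varphi(x_i))$ and $v_{i,j}=\mathrm{ord}_t(\varphi^{(d)}(a_{i,j}))$, comparing $t$-orders on both sides of the identity forces the existence of an index $j^*\in\{1,\ldots,b_i\}$ for which $b_iu_i\geq v_{i,j^*}+(b_i-j^*)u_i$, that is, $u_i\geq v_{i,j^*}/j^*$.

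I complete the argument by invoking the structure of the elimination algebra for the monic hypersurface $X_i$: as explained in \cite{V}, \cite{Br_V}, \cite{V07} and \cite[Example 1.5.15]{Br_E_P-E}, the elimination algebra $\mathcal{G}_{X_i}^{(d)}$ of the monic presentation $f_i$ is weakly equivalent to $S[a_{i,1}W,\ldots,a_{i,b_i}W^{b_i}]$, which gives the bound
$$\mathrm{ord}_t(\varphi^{(d)}(\mathcal{G}_{X_i}^{(d)}))\leq \min_j v_{i,j}/j\leq v_{i,j^*}/j^*.$$
Combining this with the inclusion $\mathcal{G}_{X_i}^{(d)}\subset\mathcal{G}_X^{(d)}$ from \eqref{eq:elim_hypers} yields
$$\mathrm{ord}_t(\varphi^{(d)}(\mathcal{G}_X^{(d)}))\leq \mathrm{ord}_t(\varphi^{(d)}(\mathcal{G}_{X_i}^{(d)}))\leq u_i,$$
which establishes (2) and hence the lemma. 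The main obstacle I foresee is precisely the elimination-theoretic inequality $\mathrm{ord}_t(\varphi^{(d)}(\mathcal{G}_{X_i}^{(d)}))\leq \min_j v_{i,j}/j$; it rests on the characteristic-zero description of the elimination algebra for a monic polynomial and invariance of the order under weak equivalence. The rest of the argument is a purely formal application of the valuative inequality together with the decomposition of $\mathcal{G}_X^{(n)}$.
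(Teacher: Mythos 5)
Your argument reaches the correct conclusion and shares the overall skeleton with the paper's proof --- both begin from the decomposition \eqref{eq:Gn_simple}/\eqref{eq:GcomoHypersups} to reduce $r_{X,\varphi}$ to $\min\{\mathrm{ord}_t(\varphi(x_i)),\,\mathrm{ord}_t(\varphi^{(d)}(\mathcal{G}_X^{(d)}))\}$, and both exploit the inclusion $\mathcal{G}_{X_i}^{(d)}\subset\mathcal{G}_X^{(d)}$. The difference lies in how the hypersurface inequality $\mathrm{ord}_t(\varphi(x_i))\geq\mathrm{ord}_t(\varphi^{(d)}_i(\mathcal{G}_{X_i}^{(d)}))$ is obtained: the paper simply cites \cite[Lemma 4.1.2]{Br_E_P-E} as a black box and then sandwiches $r_{X,\varphi}$ between $\mathrm{ord}_t(\varphi^{(d)}(\mathcal{G}_X^{(d)}))$ on both sides, deducing assertion (2) afterwards; you instead prove (2) first by an explicit valuative argument --- using that $\varphi(f_i)=0$ because $f_i\in I(X)$, comparing $t$-orders in the monic relation, and then invoking the characteristic-zero description of $\mathcal{G}_{X_i}^{(d)}$ as being weakly equivalent to the coefficient algebra --- and you read off (1) at the end. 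Your route is more self-contained in that it reveals the mechanism hidden in the cited lemma, at the cost of leaning directly on the elimination-algebra structure theorem and on the compatibility of $\mathrm{ord}_t\circ\varphi$ with weak equivalence (which you rightly flag as the load-bearing step). One small imprecision: you write the coefficient algebra as $S[a_{i,1}W,\ldots,a_{i,b_i}W^{b_i}]$, but the setup of the paper implicitly assumes the Tschirnhausen normalization $a_{i,1}=0$ (this is what makes $x_iW\in\mathcal{G}_X^{(n)}$, i.e.\ \eqref{eq:Gn_simple}, possible via $\partial_{x_i}^{b_i-1}f_i$); without it, $a_{i,1}W$ does not belong to the elimination algebra and your order comparison could stall on the index $j^*=1$. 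With $a_{i,1}=0$ the term $j=1$ vanishes from the relation, $j^*\geq 2$ automatically, and your argument goes through.
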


\begin{proof}
It follows from (\ref{eq:Gn_simple}) and (\ref{eq:GcomoHypersups}) that
\begin{align*}
 & r_{X,\varphi }= \mathrm{ord}_{t}(\varphi (\mathcal{G}_X^{(n)}))=\mathrm{min}_{i=1,\ldots ,n-d} \left\{ \mathrm{ord}_{t}(\varphi _i^{(d+1)}(\mathcal{G}_{X_i}^{(d+1)})) \right\} = \\
  = \mathrm{min} & \left\{ \mathrm{ord}_t(\varphi (x_1)), \ldots ,\mathrm{ord}_t(\varphi (x_{n-d})), \mathrm{ord}_{t}(\varphi ^{(d)}(\mathcal{G}_X^{(d)})) \right\}  \leq  \mathrm{ord}_{t}(\varphi ^{(d)}(\mathcal{G}_X^{(d)}))\mbox{.}
\end{align*}
On the other hand, for each $i$, by \cite[Lemma 4.1.2]{Br_E_P-E},
\begin{equation}\label{eq:lemma_1}\mathrm{ord}_{t}(\varphi _i^{(d+1)}(\mathcal{G}_{X_i}^{(d+1)}))=\mathrm{min}\left\{ \mathrm{ord}_t(\varphi _i^{(d+1)}(x_i)),\mathrm{ord}_{t}(\varphi _i^{(d)}(\mathcal{G}_{X_i}^{(d)})) \right\} = \mathrm{ord}_{t}(\varphi _i^{(d)}(\mathcal{G}_{X_i}^{(d)}))\mbox{,}
\end{equation}
so 
$$r_{X,\varphi }=\mathrm{min}_{i=1,\ldots ,n-d}\left\{ \mathrm{ord}_{t}(\varphi _i^{(d)}(\mathcal{G}_{X_i}^{(d)})) \right\} \mbox{.}$$
But note that $\mathcal{G}_{X_i}^{(d)}\subset \mathcal{G}_{X}^{(d)}$ and $\varphi _i^{(d)}=\varphi ^{(d)}$ (see (\ref{eq:elim_hypers}) and (\ref{diag:fact_hyp_arcs})). Thus, 
$$\varphi _i^{(d)}(\mathcal{G}_{X_i}^{(d)})=\varphi ^{(d)}(\mathcal{G}_{X_i}^{(d)})\subset \varphi ^{(d)}(\mathcal{G}_{X}^{(d)})$$
and
\begin{equation}\label{eq:lemma_2}
\mathrm{ord}_t(\varphi _i^{(d)}(\mathcal{G}_{X_i}^{(d)}))\geq \mathrm{ord}_t(\varphi ^{(d)}(\mathcal{G}_{X}^{(d)}))\mbox{.}
\end{equation}
Consequently,
$$\mathrm{ord}_t(\varphi ^{(d)}(\mathcal{G}_{X}^{(d)}))\geq r_{X,\varphi }\geq \mathrm{ord}_{t}(\varphi ^{(d)}(\mathcal{G}_X^{(d)}))\mbox{,}$$
proving $1$. Now $2$ is a consequence of (\ref{eq:lemma_1}), together with (\ref{eq:lemma_2}) and the fact that, for all $i=1,\ldots ,n-d$,
$$\varphi _i^{(d)}(x_i)=\varphi _i^{(d+1)}(x_i)=\varphi (x_i)\mbox{.}$$
\end{proof}

\section{Proof of the main result}

In order to prove Theorem \ref{thm:main}, let us divide it in two one side implications, reformulated in Propositions \ref{prop:sing_small} and \ref{prop:sing_big} respectively, in a way that will be more convenient for their respective proofs. We first give a simple version of the proof of the easier one:

\begin{Prop}\label{prop:sing_small}
Let $\xi $ be an isolated point of $\mathrm{\underline{Max}\, mult}(X)$. Then there exists a positive integer $Q\in \mathbb{Z}_{>0}$, depending on $X$ and $\xi $, such that for any arc $\varphi $ in $X$ through $\xi $, 
$$\bar{r}_{X,\varphi }\leq Q\mbox{.}$$
\end{Prop}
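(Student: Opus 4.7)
The plan is to pass through the elimination $\beta\colon V^{(n)}\to V^{(d)}$ and reduce the problem to the dimension-$d$ algebra $\mathcal{G}_X^{(d)}$ at the image point $\xi^{(d)}=\beta(\xi)$. By Lemma \ref{lemma:x_1_not_important}(1), $r_{X,\varphi}=\mathrm{ord}_t(\varphi^{(d)}(\mathcal{G}_X^{(d)}))$, so any bound on $\bar{r}_{X,\varphi}$ can be read off from the order of $\mathcal{G}_X^{(d)}$ along $\varphi^{(d)}$. Since $\beta$ is a homeomorphism between $\mathrm{Sing}(\mathcal{G}_X^{(n)})=\mathrm{\underline{Max}\, mult}(X)$ and $\mathrm{Sing}(\mathcal{G}_X^{(d)})$ in a neighborhood of $\xi$, the hypothesis that $\xi$ is isolated in $\mathrm{\underline{Max}\, mult}(X)$ translates into $\xi^{(d)}$ being isolated in $\mathrm{Sing}(\mathcal{G}_X^{(d)})$.

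I would next extract a positive integer $N$ from this isolation. Since $\mathcal{G}_X^{(n)}$ was chosen as a differential closure in (\ref{eq:pres_mult}) and elimination preserves differential closedness, $\mathcal{G}_X^{(d)}$ is differentially closed over $S$. Consequently its singular locus coincides with $V(I_1)$, where $I_1\subset S$ denotes its weight-one component. Isolation of $\xi^{(d)}$ gives $V(I_1)=\{\xi^{(d)}\}$ locally, whence $\mathrm{rad}(I_1)=\mathcal{M}_{\xi^{(d)}}$, and Noetherianity produces $N\in\mathbb{Z}_{>0}$ with $\mathcal{M}_{\xi^{(d)}}^N\subset I_1$; this $N$ will play the role of $Q$.

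For an arc $\varphi$ in $X$ through $\xi$ with projection $\varphi^{(d)}$, every element of $I_1$ contributes a weight-one generator of $\mathcal{G}_X^{(d)}$, so
$$r_{X,\varphi}=\mathrm{ord}_t(\varphi^{(d)}(\mathcal{G}_X^{(d)}))\leq\min_{g\in\mathcal{M}_{\xi^{(d)}}^N}\mathrm{ord}_t(\varphi^{(d)}(g))=N\cdot\mathrm{ord}(\varphi^{(d)}),$$
the last equality obtained by taking $g=z_{j_0}^N$ where $z_{j_0}$ realizes $\mathrm{ord}(\varphi^{(d)})$. It remains to compare $\mathrm{ord}(\varphi)$ with $\mathrm{ord}(\varphi^{(d)})$: since $\xi^{(d)}\in\mathrm{Sing}(\mathcal{G}_X^{(d)})$ every weight-$i$ generator of $\mathcal{G}_X^{(d)}$ lies in $\mathcal{M}_{\xi^{(d)}}^i$, forcing $r_{X,\varphi}\geq\mathrm{ord}(\varphi^{(d)})$, and Lemma \ref{lemma:x_1_not_important}(2) then yields $\mathrm{ord}_t(\varphi(x_i))\geq r_{X,\varphi}\geq\mathrm{ord}(\varphi^{(d)})$ for each $i$. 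Combined with (\ref{eq:orders_arcs}) this pins down $\mathrm{ord}(\varphi)=\mathrm{ord}(\varphi^{(d)})$, and dividing gives $\bar{r}_{X,\varphi}\leq N$.

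The sensitive step, and the main obstacle, is the identification $\mathrm{Sing}(\mathcal{G}_X^{(d)})=V(I_1)$: it rests on differential closedness of $\mathcal{G}_X^{(d)}$. If that is not directly at hand, I would instead pass to $\mathrm{Diff}(\mathcal{G}_X^{(d)})$ and invoke invariance of the order of a Rees algebra along an arc under weak equivalence of algebras representing the same multiplicity; either route extracts the crucial containment $\mathcal{M}_{\xi^{(d)}}^N\subset I_1$ that underlies the whole bound.
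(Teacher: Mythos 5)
Your proof is correct, but it takes a genuinely different route from the paper's. The paper stays at the ambient dimension $n$ and works directly with $\mathcal{G}_X^{(n)}$, which is differentially closed by construction (\ref{eq:pres_mult}); isolation of $\xi$ in $\mathrm{\underline{Max}\, mult}(X)$ then forces $I_1$ (the weight-one piece of $\mathcal{G}_X^{(n)}$) to contain pure powers $x_1^{a_1},\ldots,x_{n-d}^{a_{n-d}},z_1^{a_{n-d+1}},\ldots,z_d^{a_n}$ of a regular system of parameters of $\mathcal{O}_{V^{(n)},\xi}$, and dividing by the parameter realizing $\mathrm{ord}(\varphi)$ closes the argument in one line. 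You instead descend through the elimination $\beta$ to $\mathcal{G}_X^{(d)}$ and run the same Noetherian argument in dimension $d$. This route needs two extra ingredients the paper's avoids: (a) differential closedness of $\mathcal{G}_X^{(d)}$, which is not stated in the paper but is indeed true in characteristic zero, since a differential operator on $S/k$ extends $k[x_1,\ldots,x_{n-d}]$-linearly to $R=S[x_1,\ldots,x_{n-d}]$, so $\mathcal{G}_X^{(d)}=\mathcal{G}_X^{(n)}\cap S[W]$ inherits closedness from $\mathcal{G}_X^{(n)}$; and (b) the equality $\mathrm{ord}(\varphi)=\mathrm{ord}(\varphi^{(d)})$, which you correctly deduce from Lemma \ref{lemma:x_1_not_important}(2) together with the containment $I_{i}\subset\mathcal{M}_{\xi^{(d)}}^{i}$ that holds because $\xi^{(d)}\in\mathrm{Sing}(\mathcal{G}_X^{(d)})$. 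Both approaches give the theorem; the paper's is shorter because it sidesteps these two steps, while yours produces as a byproduct the useful general equality $\mathrm{ord}(\varphi)=\mathrm{ord}(\varphi^{(d)})$ for every arc in $X$ through a point of $\mathrm{\underline{Max}\, mult}(X)$. One caution: the fallback you sketch at the end is weaker than your primary route. The weak-equivalence statements available (Remark 1.1) concern orders $\mathrm{ord}_{\eta}$ at points $\eta$ of $\mathrm{Sing}(\mathcal{G})$, not the $t$-order $\mathrm{ord}_t(\varphi^{(d)}(\cdot))$ along an arc whose generic point lies \emph{outside} $\mathrm{Sing}(\mathcal{G}_X^{(d)})$; so ``invariance under weak equivalence'' does not, without further argument, give $\mathrm{ord}_t(\varphi^{(d)}(\mathcal{G}_X^{(d)}))=\mathrm{ord}_t(\varphi^{(d)}(\mathrm{Diff}(\mathcal{G}_X^{(d)})))$. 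Stick with the direct verification of differential closedness.
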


\begin{proof}
Consider the graded structure of a Rees algebra $\mathcal{G}_X^{(n)}$ representing the multiplicity of $X$ in a neighborhood of $\xi $ as in (\ref{eq:pres_mult}),
$$\mathcal{G}_{X}^{(n)}=\oplus _{i\geq 0}I_iW^i\mbox{.}$$
Since $\mathcal{G}_X^{(n)}$ is differentially closed, the set $\mathrm{\underline{Max}\, mult}(X)$ is determined by the zeros of the ideal $I_1$ (see \cite[Proposition 4.4]{V3}). Therefore, $\mathrm{\underline{Max}\, mult}(X)$ being of dimension $0$ is equivalent to $\sqrt{I_1}$ being a maximal ideal, which, for a (any) regular system of parameters $\left\{ x_1,\ldots ,x_{n-d},z_1,\ldots ,z_d \right\} $ in $\mathcal{O}_{X,\xi }$, is also equivalent to $I_1$ containing some ideal of the form 
$$(x_1^{a_1},\ldots ,x_{n-d}^{a_{n-d}},z_1^{a_{n-d+1}},\ldots ,z_{d}^{a_{n}})$$
for some positive integers $a_1,\ldots ,a_{n}$. Note that this implies that
$$\mathcal{G}_X^{(n)}\supset \mathcal{O}_{X,\xi }[x_1^{a_1}W,\ldots ,x_{n-d}^{a_{n-d}}W,z_1^{a_{n-d+1}}W,\ldots ,z_d^{a_n}W] \mbox{.}$$
Therefore,
$$\varphi (\mathcal{G}_X^{(n)})\supset \mathcal{O}_{X,\xi }[\varphi (x_1^{a_1})W,\ldots ,\varphi (x_{n-d}^{a_{n-d}})W,\varphi (z_1^{a_{n-d+1}})W,\ldots ,\varphi (z_d^{a_n})W] \mbox{,}$$
and
\small
$$\mathrm{ord}_{t}(\varphi (\mathcal{G}_X^{(n)}))\leq \mathrm{min}\left\{ a_1\cdot \mathrm{ord}_t(\varphi (x_1)),\ldots ,a_{n-d}\cdot \mathrm{ord}_t(\varphi (x_{n-d})),a_{n-d+1}\cdot \mathrm{ord}_t(\varphi (z_1)),\ldots ,a_{n}\cdot \mathrm{ord}_t(\varphi (z_{d})) \right\} \mbox{.}$$
\normalsize
Thus
$$\bar{r}_{X,\varphi }\leq a_j\in \mathbb{Z}_{>0}$$
for any $j\in \{ 1,\ldots ,n-d\} $ such that $\mathrm{ord}(\varphi )=\mathrm{ord}_t(\varphi (x_j))$ or any $j\in \{ n-d+1,\ldots ,n \} $ such that $\mathrm{ord}(\varphi )=\mathrm{ord}_t(\varphi (z_{j-n+d}))$.
\end{proof}

\vspace{0.2cm}

The bound given by this proof is not optimal. In general, a rational number which will be smaller than the integer given by the $a_j$'s can be found, yielding an optimal bound. Note that this rational number is an invariant of $X$ at $\xi $, but since it is not needed in the proof of Theorem \ref{thm:main}, we ignore it here.

\vspace{0.2cm}

\begin{Rem}
For some arcs, we can say more about $\bar{r}_{X,\varphi }$: If $\varphi $ is such that  $\mathrm{ord}(\varphi )=\mathrm{ord}_t(\varphi (x_j))$ for some $j\in \left\{ 1,\ldots ,n-d \right\} $, then $\bar{r}_{X,\varphi }=1$. Indeed, $a_1=\ldots =a_{n-d}=1$ in the proof of Proposition \ref{prop:sing_small}, because $x_1,\ldots ,x_{n-d}\in I_1$ (see (\ref{eq:Gn_simple})).
\end{Rem}

\vspace{0.2cm}

In the next section, a precise upper bound will be given under some special condition over $X$ at $\xi $, in terms of orders of elimination algebras. This condition is related with the $\tau $ invariant of $\mathcal{G}_X^{(n)}$ at $\xi $.

\vspace{0.2cm}

We prove now the most delicate implication. To make the proof easier to understand, we will deal separately with an easy case first, even though it of course follows from the general one, which we prove afterwards. The reader unfamiliar with the techniques of resolution used in this proof, as well as definitions of strict and total transform of an ideal, can consult them in \cite[Section 7]{Br_E_V} or \cite{E_V}.

\vspace{0.2cm}

\begin{Prop}\label{prop:sing_big}
If $\xi $ lies in a component of $\mathrm{\underline{Max}\, mult}(X)$ of dimension greater or equal to $1$, then for any $q\in \mathbb{Q}$, one can find an arc $\varphi $ in $X$ through $\xi $ such that 
$$\bar{r}_{X,\varphi }>q\mbox{.}$$
\end{Prop}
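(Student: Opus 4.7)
The plan is to use Lemma~\ref{lemma:x_1_not_important} to move the problem to the eliminating space $V^{(d)}$ and then to exhibit, for any given $q$, an explicit family of arcs $\varphi_N$ whose normalized order of contact grows linearly in $N$, by perturbing a parametrization of a curve contained in $\mathrm{Sing}(\mathcal{G}_X^{(d)})$.

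\textbf{Reduction.} By Lemma~\ref{lemma:x_1_not_important}(1) and (\ref{eq:orders_arcs}),
$$\bar{r}_{X,\varphi}\;=\;\frac{\mathrm{ord}_t(\varphi^{(d)}(\mathcal{G}_X^{(d)}))}{\mathrm{ord}(\varphi)}\;\geq\;\frac{\mathrm{ord}_t(\varphi^{(d)}(\mathcal{G}_X^{(d)}))}{\mathrm{ord}(\varphi^{(d)})},$$
so it suffices to build arcs $\varphi^{(d)}$ in $V^{(d)}$ through $\xi^{(d)}$ with the right-hand ratio exceeding $q$, and then lift them to arcs in $X$ not contained in $\mathrm{\underline{Max}\, mult}(X)$. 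Since $\beta $ induces a local homeomorphism $\mathrm{\underline{Max}\, mult}(X)\cong \mathrm{Sing}(\mathcal{G}_X^{(d)})$, the hypothesis yields a (possibly formal) curve $C^{(d)}\subset \mathrm{Sing}(\mathcal{G}_X^{(d)})$ through $\xi^{(d)}$.

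\textbf{Easy case: $C^{(d)}$ formally smooth at $\xi^{(d)}$.} Complete at $\xi^{(d)}$, and choose a regular system of parameters $\{z_1,\ldots,z_d\}\subset \mathcal{O}_{V^{(d)},\xi^{(d)}}$ such that $C^{(d)}=V(z_1,\ldots,z_{d-1})$, parametrized by $z_d$. For each integer $N\geq 1$ define
$$\varphi_N^{(d)}:K[[z_1,\ldots,z_d]]\longrightarrow K[[t]],\qquad z_j\mapsto t^N\text{ for }j<d,\quad z_d\mapsto t.$$
Because $(z_1,\ldots,z_{d-1})$ is a regular prime of the complete regular local ring $K[[z_1,\ldots,z_d]]$, its symbolic and ordinary powers coincide, and upper semicontinuity of order combined with $C^{(d)}\subset \mathrm{Sing}(\mathcal{G}_X^{(d)})$ forces each element $gW^m$ of a fixed finite set of generators of $\mathcal{G}_X^{(d)}$ to satisfy $g\in (z_1,\ldots,z_{d-1})^m$. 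Expanding $g$ as a power series and applying $\varphi_N^{(d)}$ termwise, every term has $t$-order at least $Nm$, so $\mathrm{ord}_t(\varphi_N^{(d)}(gW^m))\geq N$ for every generator, whence $\mathrm{ord}_t(\varphi_N^{(d)}(\mathcal{G}_X^{(d)}))\geq N$. Since $\varphi_N^{(d)}(z_d)=t$ one has $\mathrm{ord}(\varphi_N^{(d)})=1$, so the ratio is at least $N$.

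\textbf{Lift and finish.} To descend to $X$, I use that each $f_i$ in (\ref{eq:pres_mult}) is monic in $x_i$ over $S$: by Newton--Puiseux, $\varphi_N^{(d)}(f_i)\in K[[t]][x_i]$ has a root in some $K'[[t^{1/e}]]$, which I take as $\varphi_N(x_i)$; the reparametrization $t=(t')^e$ scales both $r_{X,\varphi_N}$ and $\mathrm{ord}(\varphi_N)$ by the same factor, leaving $\bar{r}_{X,\varphi_N}\geq N$. Replacing the rule $z_j\mapsto t^N$ by $z_j\mapsto c_j t^N$ with generic constants $c_j\in K$ keeps the previous estimate valid while making the image curve of $\varphi_N^{(d)}$ distinct from $C^{(d)}$ and transverse to $\mathrm{Sing}(\mathcal{G}_X^{(d)})$ away from $\xi^{(d)}$, so $\varphi_N$ is not contained in $\mathrm{\underline{Max}\, mult}(X)$. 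Taking $N$ large enough then gives $\bar{r}_{X,\varphi_N}>q$.

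\textbf{Main obstacle.} The delicate step is the \emph{general case}, where every curve in $\mathrm{Sing}(\mathcal{G}_X^{(d)})$ through $\xi^{(d)}$ is formally singular; one then has to parametrize an irreducible branch of $C^{(d)}$ by Puiseux series and work with symbolic powers of its non-regular defining prime, rescaling the exponent $N$ by a suitable rational factor so that the linear-in-$N$ lower bound survives. This is presumably the reason the author isolates the smooth case first; by contrast, the Newton--Puiseux lifting step, though notationally heavy, is standard.
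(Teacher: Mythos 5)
Your reduction and your treatment of the formally smooth case match the paper's argument in substance: reduce via Lemma~\ref{lemma:x_1_not_important} and (\ref{eq:orders_arcs}) to bounding $\mathrm{ord}_t(\varphi^{(d)}(\mathcal{G}_X^{(d)}))/\mathrm{ord}(\varphi^{(d)})$ from below, choose a regular system of parameters so that the image curve is $V(z_1,\ldots,z_{d-1})$, observe that $C^{(d)}\subset\mathrm{Sing}(\mathcal{G}_X^{(d)})$ together with the regularity of the defining prime forces $I_m\subset J^m$ for each graded piece, and send $z_j\mapsto t^N$ ($j<d$), $z_d\mapsto t$ to get a ratio at least $N$. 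For the lift to $X$, your Newton--Puiseux device has a subtle gap: picking roots of $\varphi_N^{(d)}(f_i)$ produces an arc in $V(f_1,\ldots,f_{n-d})$, but $B=\mathcal{O}_{X,\xi}$ is in general a \emph{proper} quotient of $R/(f_1,\ldots,f_{n-d})$ (each $f_i$ is only the minimal polynomial of $\overline{x}_i$), so the roots must be chosen compatibly with a single prime $\mathcal{Q}\subset B$ lying over $\mathcal{P}=\mathrm{Ker}(\varphi_N^{(d)})$. The paper sidesteps this entirely by lifting abstractly: take such a $\mathcal{Q}$ by going-up for the finite extension $S\hookrightarrow B$, note $\mathcal{O}_{X,\xi}/\mathcal{Q}$ is one-dimensional, and pick any nontrivial arc on it. That route is both cleaner and actually covers the point you relegate to the choice of ``generic constants $c_j$'' (the resulting $\varphi^{(d)}_N$ has kernel $(y_2-y_1^N,\,y_2-y_j)$, which is visibly not the kernel of the curve, so the arc is not contained in $\mathrm{\underline{Max}\,mult}(X)$).

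The genuine gap is the singular-curve case, which you explicitly do not carry out. The sketch you offer there --- parametrize a branch by Puiseux series and work with symbolic powers of a non-regular prime, rescaling $N$ rationally --- does not obviously go through. Your smooth-case estimate relies on the identity $J^{(m)}=J^m$: from $g\in J^{(m)}$ you get $g\in J^m$ and then $\mathrm{ord}_t(\varphi^{(d)}_N(g))\geq Nm$ because $\varphi^{(d)}_N(J)\subset(t^N)$. When $J$ is not a regular prime, $J^{(m)}\supsetneq J^m$ in general, and the containment $I_m\subset J^{(m)}$ (which is all $C\subset\mathrm{Sing}$ gives you) only controls the $J$-adic order at the \emph{generic point} of $C$, not the $t$-order under an arc centered at $\xi^{(d)}$; no rescaling of the exponent fixes that mismatch by itself. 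The paper instead performs an embedded resolution of $C$ by point blow-ups, pulls the arc construction back to the final chart where the strict transform $C'_r$ is smooth, and crucially uses the transformation law (\ref{eq:trans_law_RA}) for $\mathcal{G}_X^{(d)}$ along the resulting permissible sequence: one gets $I_i\mathcal{O}_{V_r^{(d)}}\subset I_{i,r}\subset I(C'_r)^i$ upstairs (the last inclusion by the smooth-case argument applied to $\mathcal{G}_{X,r}^{(d)}$), and then composes with $\pi^*$ to descend the estimate to $\xi^{(d)}$, the constant $a=\mathrm{ord}\,\pi^*(\mathcal{M}_{\xi^{(d)}})$ absorbing the exceptional multiplicity. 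So the route through desingularization is not a cosmetic convenience but the mechanism that replaces the failed symbolic-power argument; as written, your proposal proves only the special case and flags, without resolving, the part of the proposition that actually requires work.
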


\begin{proof}
Since $r_{X,\varphi }=\mathrm{ord}_{t}(\varphi ^{(d)}(\mathcal{G}_X^{(d)}))$ if $\varphi ^{(d)}=\varphi \circ \beta _X^*$ (see Lemma \ref{lemma:x_1_not_important}), our strategy here will be choosing an arc $\bar{\varphi }^{(d)}$ in $V^{(d)}$ through $\xi ^{(d)}$ which gives $\mathrm{ord}_{t}(\bar{\varphi }^{(d)}(\mathcal{G}_X^{(d)}))$ big enough first, and then lifting it via $\beta _X$ to an arc $\varphi $ in $X$ through $\xi $, proving afterwards that it satisfies the statement in the Proposition.

\vspace{0.2cm}

Suppose first that there exists a smooth curve $\widetilde{C}\subset \mathrm{\underline{Max}\, mult}(X)$ containing $\xi $. Then $C=\beta _X(\widetilde{C})\subset V^{(d)}$ is a smooth curve containing $\xi ^{(d)}$(see \cite[Theorem 6.3]{V}). Assume that $C$ is defined by a prime ideal $J\subset \mathcal{O}_{V^{(d)},\xi ^{(d)}}$. Consider the family of arcs $\bar{\varphi }_N^{(d)}$ in $V^{(d)}$ through $\xi ^{(d)}$, for $N\in \mathbb{Z}_{>0}$, given by
\begin{align*}
\bar{\varphi }_N^{(d)}: \mathcal{O}_{V^{(d)},\xi ^{(d)}} & \longrightarrow K[[t]]\mbox{,}\\
J & \longmapsto t^N\mbox{,}\\
\mathcal{M}_{\xi ^{(d)}} & \longmapsto t\mbox{.}
\end{align*}
This can be done because we may assume that, in this situation, $J=(y_2,\ldots ,y_{d})$ for some regular system of parameters $\left\{ y_1,\ldots ,y_d \right\} $ of $\mathcal{O}_{V^{(d)},\xi ^{(d)}}$. Then, such a family of arcs could be constructed by just defining $\bar{\varphi }_N^{(d)}(y_1)=t$ and $\bar{\varphi }_N^{(d)}(y_j)=t^N$ for $j=2,\ldots ,d$. For any $N\in \mathbb{N}$, the arc $\bar{\varphi }_N^{(d)}$ can be lifted to an arc $\varphi _N$ in $X$ through $\xi $ satisfying $\bar{r}_{X,\varphi _N}\geq N$ as follows:

\vspace{0.2cm}

Note that we are under the hypothesis $d\geq 2$. Consider the ideal $\mathcal{P}=\mathrm{Ker}(\bar{\varphi }_N^{(d)})\subset \mathcal{O}_{V^{(d)},\xi ^{(d)}}$. There exists a prime ideal $\mathcal{Q}$ in $\mathcal{O}_{X,\xi }$ dominating $\mathcal{P}$. We have the following commutative diagram:
$$
\xymatrix{ 
\mathcal{Q} \subset \mathcal{O}_{X,\xi } \ar[r]^{\mu } & \mathcal{O}_{X,\xi }/\mathcal{Q} \\
\mathcal{P} \subset \mathcal{O}_{V^{(d)},\xi ^{(d)}} \ar[r]^{\mu ^{(d)}} \ar@<-2ex>[u]^{\beta _X^*} & \mathcal{O}_{V^{(d)},\xi ^{(d)}}/\mathcal{P} \ar[u] ^{\bar{\beta _X}^*}
}
$$
where the vertical arrows are finite morphisms, and both rings on the right side are $1$-dimensional, so $\mathcal{Q}$ defines a curve. One can find a nontrivial arc $\tilde{\varphi }_N: \mathcal{O}_{X,\xi }/\mathcal{Q} \longrightarrow K[[t]]$ through $\mu (\xi )$,\footnote{Here $K$ will be the residue field of $\mathcal{O}_{X,\xi }/\mathcal{Q}$ at $\mu (\xi )$.} which induces also an arc 
$$\varphi _N=\tilde{\varphi _N}\circ \mu :\mathcal{O}_{X,\xi }\longrightarrow K[[t]]$$
through $\xi $, and
$$\varphi ^{(d)}_N=\varphi _N \circ \beta _X^*=\tilde{\varphi }_N \circ \bar{\beta _X}^* \circ \mu ^{(d)}: \mathcal{O}_{V^{(d)},\xi ^{(d)}} \longrightarrow K[[t]]$$
with
$$\mathrm{Ker}(\varphi ^{(d)}_N)=\mathcal{P}=\mathrm{Ker}(\bar{\varphi }^{(d)}_N)=(y_2-y_1^N, y_2-y_j:2<j\leq d)\subset \mathcal{O}_{V^{(d)},\xi ^{(d)}}\mbox{.}$$
Since $C\subset \mathrm{Sing}(\mathcal{G}_X^{(d)})$, 
$$\mathrm{ord}_{C}(I_i\mathcal{O}_{V^{(d)},\xi ^{(d)}})\geq i\mbox{\; }\forall  i\geq 0\mbox{,}$$
so $I_i\mathcal{O}_{V^{(d)},C}\subset J^i\mathcal{O}_{V^{(d)},C}$. But note that $J$ is a regular prime in $\mathcal{O}_{V^{(d)},\xi ^{(d)}}$ defining $C$, so $I_i\mathcal{O}_{V^{(d)},\xi ^{(d)}}\subset J^i$ for all $i\geq 0$. Consequently,
$$\mathcal{G}_X^{(d)}\subset \mathcal{O}_{V^{(d)},\xi ^{(d)}}[JW]\mbox{,}$$
and
$$\varphi _N^{(d)}(\mathcal{G}_X^{(d)})\subset \varphi _N^{(d)}(\mathcal{O}_{V^{(d)},\xi ^{(d)}}[JW])\mbox{.}$$
Hence, for $\varphi _N^{(d)}$ constructed as above, 
$$\mathrm{ord}_{t}(\varphi _N^{(d)}(\mathcal{G}_X^{(d)}))\geq \mathrm{ord}_{t}(\varphi _N^{(d)}(\mathcal{O}_{V^{(d)},\xi ^{(d)}}[JW]))=\mathrm{ord}_t(\varphi _N^{(d)}(J))\mbox{.}$$
Using also Lemma \ref{lemma:x_1_not_important} and the fact that $\mathrm{ord}(\varphi _N)\leq \mathrm{ord}(\varphi _N^{(d)})=\mathrm{ord}_{t}(\varphi _N^{(d)}(\mathcal{M}_{\xi ^{(d)}}))$ (see (\ref{eq:orders_arcs})), we arrive to
$$\bar{r}_{X,\varphi _N}=\frac{\mathrm{ord}_{t}(\varphi _N^{(d)}(\mathcal{G}_X^{(d)}))}{\mathrm{ord}(\varphi _N)}\geq \frac{\mathrm{ord}_t(\varphi _N^{(d)}(J))}{\mathrm{ord}(\varphi _N)}\geq \frac{\mathrm{ord}_t(\varphi _N^{(d)}(J))}{\mathrm{ord}(\varphi _N^{(d)})}=\frac{\mathrm{ord}_t(\varphi _N^{(d)}(J))}{\mathrm{ord}_{t}(\varphi _N^{(d)}(\mathcal{M}_{\xi ^{(d)}}))}\mbox{.}$$
Assume that $\varphi _N^{(d)}(y_j)=u_jt^{\alpha _j}$ for $j=1,\ldots ,d$ for some $u_j$ units in $K[[t]]$ and some $\alpha _j\in \mathbb{Z}_{>0}$. Then 
$$\varphi _N^{(d)}(y_2-y_1^N)=0=\varphi _N^{(d)}(y_2)-\varphi _N^{(d)}(y_1)^N=u_2t^{\alpha _2}-u_1^Nt^{\alpha _1\cdot N}\mbox{\;  and}$$
$$\varphi _N^{(d)}(y_2-y_j)=0=\varphi _N^{(d)}(y_2)-\varphi _N^{(d)}(y_j)=u_2t^{\alpha _2}-u_jt^{\alpha _j}\mbox{\; for\; }2<j\leq d\mbox{.}$$
Necessarily
$$\alpha _2=\alpha _1\cdot N\mbox{\; and}$$
$$\alpha _2=\alpha _j\mbox{\; for\; }2<j\leq d\mbox{,}$$
so
$$\bar{r}_{X,\varphi _N}\geq \frac{\mathrm{ord}_t(\varphi _N^{(d)}(J))}{\mathrm{ord}_{t}(\varphi _N^{(d)}(\mathcal{M}_{\xi ^{(d)}}))}=\frac{\mathrm{min}_{i=2,\ldots ,d}\left\{ \alpha _i \right\}}{\mathrm{min}_{j=1,\ldots ,d}\left\{ \alpha _j \right\}}=\frac{\alpha _2}{\alpha _1}=N$$
which, for a fixed $q\in \mathbb{Q}$, can be greater than $q$ by just choosing $N$ big enough.

\vspace{0.2cm}

Suppose now that $\widetilde{C}\subset \mathrm{\underline{Max}\; mult}(X)$ is not smooth.  As before, assume that $C=\beta(\widetilde{C})=V(J)\subset V^{(d)}$ for some ideal $J\subset \mathcal{O}_{V^{(d)},\xi ^{(d)}}$. Consider the following sequence:
\begin{equation}\label{diag:strong_desing}
\xymatrix@R=0.3pc@C=5pt{
V^{(d)} & = & V_0^{(d)} & & & V_1^{(d)} \ar[lll]_{\pi _1} & & & \ldots \ar[lll]_{\pi _2} & & & V_r^{(d)} \ar[lll]_{\pi _r}\\
& & \cup & & & \cup & & & \ldots & & & \cup \\
C & = & C_0 & & & C'_1 & & & \ldots & & & C'_r\\
\xi ^{(d)} & = & \xi _0^{(d)} & & & \xi _1^{(d)} & & & \ldots & & & \xi _r^{(d)}
}
\end{equation}
where $\pi _i$ is the blow up at the point $\xi _{i-1}^{(d)}$, and $\xi _i^{(d)}\in \pi _i^{-1}(\xi _{i-1}^{(d)})\cap C_i'$ for $i=1,\ldots ,r$, and such that the strict transform $C'_r$  of $C_0$ by $\pi =\pi _1\circ \ldots \circ \pi _r$ is a smooth curve having normal crossings with the exceptional divisor at $\xi _r^{(d)}$. Such a sequence can always be found, being an embedded desingularization of $C$. Let us look now at the total transform $J_r=J\mathcal{O}_{V^{(d)}_r}$ of the ideal $J$ by $\pi $, which will be, locally in a neighborhood of $\xi _r^{(d)}$, of the form
$$J_r=\mathscr{M}\cdot J_r'\mbox{,}$$
where $J'_r$ is contained in the the ideal $I(C_r')$ defining the strict transform $C'_r$ of $C$ in $V_r^{(d)}$, and $\mathscr{M}$ is a locally a monomial. Let us choose a family of arcs $\bar{\varphi }_{N,r}^{(d)}$ in $V_r^{(d)}$ through $\xi _r^{(d)}$ for $N\in \mathbb{Z}_{>0}$ such that $\bar{\varphi }_{N,r}^{(d)}(I(C'_r))=t^N$ and $\bar{\varphi }_{N,r}^{(d)}(\pi ^*(\mathcal{M}_{\xi ^{(d)}}))=t^a$ for some $a \in \mathbb{Z}_{>0}$ constant, as we did for the case of $C$ smooth. For this, note that locally in a neighborhood of $\xi _r^{(d)}$, one can consider a regular system of parameters in $\mathcal{O}_{V^{(d)}_r,\xi _r^{(d)}}$ given by 
$$\left\{ \tilde{y}_1=I(H_1),\tilde{y}_2,\ldots ,\tilde{y}_{d} \right\} \mbox{,}$$
so that $I(C_r')=(\tilde{y}_2,\ldots ,\tilde{y}_{d})$, and moreover
$$\pi ^*(\mathcal{M}_{\xi ^{(d)}})=I(H_1)^{a}$$
for $a\in \mathbb{N}$, where $H_1=\pi _r^{-1}(\xi _{r-1})$ is the exceptional divisor of $\pi _r$, because of the way in which the centers of the $\pi _i$ are chosen. Consider $\bar{\varphi }_{N,r}^{(d)}$ given as
\begin{align*}
\bar{\varphi }_{N,r}^{(d)}: \mathcal{O}_{V_r^{(d)},\xi _r^{(d)}} & \longrightarrow K[[t]]\mbox{,}\\
\tilde{y}_1 & \longmapsto t\mbox{,}\\
\tilde{y}_j & \longmapsto t^N,\; \mbox{for}\; j=2,\ldots ,d\mbox{,}
\end{align*}
which satisfies the desired properties. Note that $\pi $ induces a sequence of permissible transformations of $X$ via $\beta _X$:
\begin{equation*}
\xymatrix@R=1.2pc@C=5pt{
X \ar[d]^{\beta _X} & = & X_0 & & & & & & & & & X_r \ar[lllllllll]_{\pi _X} \ar[d]^{\beta _{X_r}}\\
V^{(d)} & = & V_0^{(d)} & & & V_1^{(d)} \ar[lll]_{\pi _1} & & & \ldots \ar[lll]_{\pi _2} & & & V_r^{(d)} \ar[lll]_{\pi _r}\\
}
\end{equation*}
For each $N\in \mathbb{Z}_{>0}$, $\bar{\varphi }_{N,r}^{(d)}$ can be lifted to an arc in $X_r$ through $\xi _r^{(d)}$ via a diagram as in the regular case:
$$
\xymatrix{ 
\mathcal{Q} \subset \mathcal{O}_{X_r,\xi _r} \ar[r]^{\mu } & \mathcal{O}_{X_r,\xi _r}/\mathcal{Q} \\
\mathcal{P} \subset \mathcal{O}_{V^{(d)}_r,\xi _r^{(d)}} \ar[r]^{\mu ^{(d)}} \ar@<-2ex>[u]^{\beta _{X_r}^*} & \mathcal{O}_{V^{(d)}_r,\xi _r^{(d)}}/\mathcal{P} \ar[u] ^{\bar{\beta }_{X_r}^*}
}
$$
where $\mathcal{P}=\mathrm{Ker}(\bar{\varphi }_{N,r}^{(d)})=\mathcal{Q}\cap \mathcal{O}_{V_r^{(d)},\xi ^{(d)}_r}$. As we did in the case of $C$ a regular curve, we pick an arc\footnote{Now $K$ is the residue field of  $\mathcal{O}_{X_r,\xi _r}/\mathcal{Q}$ at $\mu (\xi _r)$.}
$$\tilde{\varphi }_{N,r}:\mathcal{O}_{X_r,\xi _r}/\mathcal{Q} \longrightarrow K[[t]]$$
and obtain
$$\varphi _{N,r}=\tilde{\varphi }_{N,r} \circ \mu :\mathcal{O}_{X_r,\xi _r}\longrightarrow K[[t]]\mbox{,}$$
so that $\mathrm{Ker}(\bar{\varphi }_{N,r}^{(d)})=\mathrm{Ker}(\varphi _{N,r}^{(d)})$, where
$$\varphi _{N,r}^{(d)}=\varphi _{N,r}\circ \beta _{X_r}^*:\mathcal{O}_{V^{(d)}_r,\xi _r^{(d)}} \longrightarrow K[[t]]\mbox{.}$$
Note that $\mathrm{Ker}(\varphi _{N,r}^{(d)})=(\tilde{y}_2-\tilde{y}_1^N,\tilde{y}_2-\tilde{y}_j:2<j\leq d)$, so 
$$\mathrm{ord}_t(\varphi _{N,r}^{(d)} (\tilde{y}_2))=\mathrm{ord}_t(\varphi _{N,r}^{(d)} (\tilde{y}_j))=N\cdot \mathrm{ord}_t(\varphi _{N,r}^{(d)} (\tilde{y}_1))$$
for $2<j\leq d$, and that 
$$\mathrm{ord}(\varphi _{N,r}^{(d)})=\mathrm{ord}_{t}(\varphi _{N,r}^{(d)}(\pi ^*(\mathcal{M}_{\xi ^{(d)}})))=\mathrm{ord}_t(\varphi _{N,r}^{(d)}(\tilde{y}_1^{a}))=a\cdot \mathrm{ord}_t(\varphi _{N,r}^{(d)}(\tilde{y}_1))\mbox{,}$$
so necessarily
\begin{equation}\label{eq:calculo_cociente_noliso}
\frac{\mathrm{ord}_{t}(\varphi _{N,r}^{(d)}(I(C_r')))}{\mathrm{ord}_{t}(\varphi _{N,r}^{(d)}(\pi ^*(\mathcal{M}_{\xi ^{(d)}})))}=\frac{\mathrm{min}_{j=2,\ldots ,d}\left\{ \mathrm{ord} _t(\varphi _{N,r}^{(d)}(\tilde{y}_j)) \right\}}{\mathrm{min}_{i=1,\ldots ,d}\left\{ \mathrm{ord} _t(\varphi _{N,r}^{(d)}(\tilde{y}_i)) \right\}}=\frac{N}{a}\mbox{.}
\end{equation}
Finally, we obtain
$$\varphi _N:\mathcal{O}_{X,\xi }\longrightarrow K[[t]]$$
by composing $\varphi _{N,r} \circ \pi _X^*$, and we also obtain its projection to $V^{(d)}$ as $\varphi ^{(d)}_N=\varphi _{N,r}^{(d)}\circ \pi ^*$. Note that the sequence of transformations in \ref{diag:strong_desing}) is such that the multiplicity of $X_i$ along the curve does not decrease along the process, and hence $C'_i\subset \beta _{X_i}(\mathrm{\underline{Max}\; mult}(X_i))$ for $i=0,\ldots ,r$. As a consequence, it induces a sequence of permissible transformations of Rees algebras for $\mathcal{G}_X^{(d)}$ as in 
\cite[Definition 6.1]{V07}, since for all $i=1,\ldots, r$, $\pi _i$ is a blow up at a regular closed subset of $\mathrm{Sing}(\mathcal{G}_{X,i-1}^{(d)})$:
\begin{equation}
\xymatrix@R=0.3pc@C=5pt{
V^{(d)} & = & V_0^{(d)} & & & V_1^{(d)} \ar[lll]_{\pi _1} & & & \ldots \ar[lll]_{\pi _2} & & & V_r^{(d)} \ar[lll]_{\pi _r}\\
\mathcal{G}_X^{(d)} & = & \mathcal{G}_{X,0}^{(d)}=\oplus_{i\geq 0} I_iW^i & & & \mathcal{G}_{X,1}^{(d)} \ar[lll] & & & \ldots \ar[lll] & & & \mathcal{G}_{X,r}^{(d)}=\oplus _{i\geq 0}I_{i,r}W^i \ar[lll] \\
}
\end{equation}
where 
$$I_i\mathcal{O}_{V^{(d)}_r} \subset I_{i,r}$$
for $i\geq 0$ (see (\ref{eq:trans_law_RA})). In particular,
$$\mathcal{G}_X^{(d)}\mathcal{O}_{V^{(d)}_r}=\oplus _{i\geq 0}(I_i\mathcal{O}_{V^{(d)}_r})W^i \subset \oplus _{i\geq 0}I_{i,r}W^i\mbox{.}$$
Moreover,
$$\varphi _N^{(d)}(\mathcal{G}_X^{(d)})=\varphi _{N,r}^{(d)}(\oplus _{i\geq 0}(I_i\mathcal{O}_{V^{(d)}_r})W^i)\subset \varphi _{N,r}^{(d)}(\mathcal{G}_{X,r}^{(d)})\mbox{,}$$
so
$$\mathrm{ord}_{t} (\varphi _N^{(d)}(\mathcal{G}_X^{(d)}))\geq \mathrm{ord}_{t}(\varphi _{N,r}^{(d)}(\mathcal{G}_{X,r}^{(d)}))\mbox{.}$$
Since $I(C_r')$ is a regular prime in $\mathcal{O}_{V_r^{(d)},\xi _r^{(d)}}$ defining a curve contained in $\mathrm{Sing}(\mathcal{G}_{X,r}^{(d)})$,
$$\mathcal{G}_{X,r}^{(d)} \subset \mathcal{O}_{V_r^{(d)},\xi _r^{(d)}}[I(C_r')W]\mbox{,}$$
and hence 
\begin{equation}\label{eq:ineq_ords_C_lisa}
\mathrm{ord}_{t} (\varphi _N^{(d)}(\mathcal{G}_X^{(d)}))\geq \mathrm{ord}_{t}(\varphi _{N,r}^{(d)}(\mathcal{G}_{X,r}^{(d)}))\geq \mathrm{ord}_{t}(\varphi _{N,r}^{(d)}(I(C_r')))\mbox{.}
\end{equation}

On the other hand,
\begin{equation*}
\mathrm{ord}(\varphi _N)=\mathrm{ord}_{t}(\varphi _N(\mathcal{M}_{\xi }))\leq \mathrm{ord}_t(\varphi ^{(d)}_N(\mathcal{M}_{\xi ^{(d)}}))=\mathrm{ord}(\varphi _N^{(d)})=\mathrm{ord}_t(\varphi _{N,r}^{(d)}(\pi ^*(\mathcal{M}_{\xi ^{(d)}})))\mbox{.}
\end{equation*}
This, together with Lemma \ref{lemma:x_1_not_important}, (\ref{eq:calculo_cociente_noliso}), and (\ref{eq:ineq_ords_C_lisa}) implies, for each $N\in \mathbb{Z}_{>0}$,
$$\bar{r}_{X,\varphi _N}=\frac{\mathrm{ord}_{t}(\varphi _N(\mathcal{G}_X^{(n)}))}{\mathrm{ord}(\varphi _N)}\geq \frac{\mathrm{ord}_{t}(\varphi _N^{(d)}(\mathcal{G}_X^{(d)}))}{\mathrm{ord}(\varphi _N^{(d)})}\geq \frac{\mathrm{ord}_{t}(\varphi _{N,r}^{(d)}(I(C_r')))}{\mathrm{ord}_t(\varphi _{N,r}^{(d)}(\pi ^*(\mathcal{M}_{\xi ^{(d)}})))}=\frac{N}{a}\mbox{.}$$
Again, it is clear that for a fixed $q\in \mathbb{Q}$, we may choose $N$ such that $\bar{r}_{X,\varphi _N}>q$.
\end{proof}

\vspace{0.2cm}

As was stated in the introduction, our main result means, in terms of the Nash multiplicity sequence, that $\xi $ is an isolated point of $\mathrm{\underline{Max}\, mult}(X)$ if and only if there exists an upper bound for the number of blowups as in (\ref{eq:Nms}) needed before the Nash multiplicity sequence decreases for the first time (normalized by the order of $\varphi $), for any arc $\varphi $ in $X$ through $\xi $:

\begin{Cor}\label{cor:persistance}
Let $X$ be a variety over a field $k$ of characteristic zero. A point $\xi \in \mathrm{\underline{Max}\, mult}(X)$ is an isolated point of $\mathrm{\underline{Max}\, mult}(X)$ if and only if $\mathrm{sup} _{\varphi }\left\{ \frac{\rho _{X,\varphi }}{\mathrm{ord}(\varphi )} \right\} < \infty $, where the supremum is taken over all arcs $\varphi $ in $X$ through $\xi $.
\end{Cor}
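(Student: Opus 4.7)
The plan is to deduce the corollary directly from Theorem \ref{thm:main} by showing that the boundedness of the family of normalized persistances $\{\bar{\rho}_{X,\varphi}\}_\varphi$ is equivalent to the boundedness of the set $\Phi_{X,\xi}=\{\bar{r}_{X,\varphi}\}_\varphi$. The key fact I would rely on is the identity $\rho_{X,\varphi}=[r_{X,\varphi}]$ recalled in the introduction, which relates the (integer-valued) persistance to the (rational-valued) order of contact with $\mathrm{\underline{Max}\, mult}(X)$.

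First I would write the elementary inequalities coming from the definition of the integer part,
\begin{equation*}
[r_{X,\varphi}]\leq r_{X,\varphi} < [r_{X,\varphi}]+1,
\end{equation*}
and divide through by $\mathrm{ord}(\varphi)\in\mathbb{Z}_{>0}$ to obtain
\begin{equation*}
\bar{\rho}_{X,\varphi}\;=\;\frac{\rho_{X,\varphi}}{\mathrm{ord}(\varphi)}\;\leq\;\bar{r}_{X,\varphi}\;<\;\bar{\rho}_{X,\varphi}+\frac{1}{\mathrm{ord}(\varphi)}\;\leq\;\bar{\rho}_{X,\varphi}+1.
\end{equation*}
This two-sided comparison already shows that $\sup_\varphi\bar{r}_{X,\varphi}<\infty$ if and only if $\sup_\varphi\bar{\rho}_{X,\varphi}<\infty$, without any further geometric input.

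Next I would invoke Theorem \ref{thm:main}: $\xi$ is an isolated point of $\mathrm{\underline{Max}\, mult}(X)$ if and only if $\Phi_{X,\xi}$ is upper bounded, i.e.\ if and only if $\sup_\varphi\bar{r}_{X,\varphi}<\infty$. Combining this with the equivalence established in the previous paragraph yields exactly the statement of the corollary. There is essentially no obstacle in this argument; the only point to be careful with is that $r_{X,\varphi}\in\mathbb{Q}_{\geq 1}$ is rational, so $[\,\cdot\,]$ is well-defined and the inequality $[r_{X,\varphi}]\geq r_{X,\varphi}-1$ is strict enough to transfer boundedness between the normalized versions.
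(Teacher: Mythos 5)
Your argument is correct, and it is cleaner than the paper's own. The paper splits the corollary into two separate implications: the forward direction is delegated to an external citation \cite[Corollary 4.3.3-1]{Br_E_P-E}, and the reverse direction is proved by contradiction, re-invoking the specific family of arcs $\varphi_{aN}$ constructed in the proof of Proposition \ref{prop:sing_big} and using $\rho_{X,\varphi_{aN}} = [r_{X,\varphi_{aN}}] \geq [N\cdot\mathrm{ord}(\varphi_{aN})] = N\cdot\mathrm{ord}(\varphi_{aN})$. You instead isolate the purely numerical observation that $\rho_{X,\varphi} = [r_{X,\varphi}]$ together with $\mathrm{ord}(\varphi)\geq 1$ forces
\begin{equation*}
\bar{\rho}_{X,\varphi}\;\leq\;\bar{r}_{X,\varphi}\;<\;\bar{\rho}_{X,\varphi}+1,
\end{equation*}
so that $\sup_\varphi\bar{\rho}_{X,\varphi}$ and $\sup_\varphi\bar{r}_{X,\varphi}$ are finite or infinite simultaneously, and then a single application of Theorem \ref{thm:main} finishes both directions at once. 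What this buys you: no external reference, no re-derivation of the geometric construction already encapsulated in Theorem \ref{thm:main}, and an equivalence rather than two one-sided arguments. The only place to be slightly careful, which you flag, is that the comparison requires $\mathrm{ord}(\varphi)$ to be a positive integer (so $1/\mathrm{ord}(\varphi)\leq 1$) and $r_{X,\varphi}$ to be finite, which holds for the arcs under consideration (those not contained in $\mathrm{\underline{Max}\,mult}(X)$), exactly the implicit standing restriction the paper uses as well.
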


\begin{proof}
The direct implication follows from \cite[Corollary 4.3.3-1]{Br_E_P-E}. For the reverse one, assume that $\mathrm{sup} _{\varphi }\left\{ \frac{\rho _{X,\varphi }}{\mathrm{ord}(\varphi )} \right\} =q\in \mathbb{Q}_{>0}$ and get to a contradiction: for $N=\left[  q \right] +1>q$, choose $\varphi _{aN}$ as in the proof of Proposition \ref{prop:sing_big}, so that it satisfies $\bar{r}_{X,\varphi _{aN}}\geq N$. This implies 
$$\rho _{X,\varphi _{aN}}=\left[ r_{X,\varphi _{aN}} \right] \geq \left[ N\cdot \mathrm{ord}(\varphi _{aN}) \right] =N\cdot \mathrm{ord}(\varphi _{aN})\mbox{.}$$
But this is equivalent to 
$$\frac{\rho _{X,\varphi _{aN}}}{\mathrm{ord}(\varphi _{aN})}\geq \frac{ N\cdot \mathrm{ord}(\varphi _{aN}) }{\mathrm{ord}(\varphi _{aN})}=N>q\mbox{,}$$
yielding a contradiction.
\end{proof}

\section{Consequences and examples}

Assume now that $\tau _{\mathcal{G}_X^{(n)},\xi }=n-1$. Recall that $\tau _{\mathcal{G}_X^{(n)},\xi }$ is the codimension of the largest linear subspace such that the addition of this subspace with the tangent cone\footnote{The tangent cone of $\mathcal{G}_X^{(n)}=\oplus _{i\geq 0}I_iW^i$ at $\xi $ is the subspace of the tangent space of $V^{(n)}$ at $\xi $ defined by the homogeneous ideal $\oplus _{i\geq 0}I_i\cdot (\mathcal{M}_{\xi }^{i}/\mathcal{M}_{\xi }^{i+1})$, where $\mathcal{M}_{\xi }$ is the maximal ideal of $V^{(n)}$ at $\xi $.} of $\mathcal{G}_X^{(n)}$ at $\xi $ lies in the tangent cone again (see \cite[Section 4]{B} for details). Then, for some regular system of parameters $\left\{ x'_1,\ldots ,x'_{n-1},z\right\} \subset R=\mathcal{O}_{V^{(n)},\xi }$ for $\mathcal{G}_X^{(n)}$ differentially closed representing the multiplicity of $X$ at $\xi $, we have
$$x'_1W,\ldots ,x'_{n-1}W\subset \mathcal{G}_X^{(n)}\mbox{,}$$
and one can find an elimination map $V^{(n)}\stackrel{\beta _X^{(1)}}{\longrightarrow} V^{(1)}$ (see \cite[Sections 13.3 and 16.1]{Br_V2}). This means, that finding a resolution of the algebra $\mathcal{G}_X^{(n)}$ is equivalent to finding a resolution of an algebra $\mathcal{G}_X^{(1)}$ over a smooth scheme of dimension $1$, namely $V^{(1)}$. We may assume that, up to an \'etale extension, $R=S'[x'_1,\ldots ,x'_{n-1}]$, where $S'$ is a regular ring of dimension $1$. Then
\begin{equation}\label{eq:Gn_tau_grande}
\mathcal{G}_X^{(n)}=R[x'_1W]\odot \ldots \odot R[x'_{n-1}W]\odot \mathcal{G}_X^{(1)}
\end{equation}
where $\mathcal{G}_X^{(1)}\subset S'[W]$. Note that, in this situation:
$$\mathrm{ord}_{\xi }(\mathcal{G}^{(n)}_X)=1=\mathrm{ord}_{\xi ^{(n-1)}}(\mathcal{G}^{(n-1)}_X)=\ldots =\mathrm{ord}_{\xi ^{(2)}}(\mathcal{G}^{(2)}_X)< \mathrm{ord}_{\xi ^{(1)}}(\mathcal{G}^{(1)}_X)\mbox{,}$$
so $\mathrm{ord}_{\xi ^{(1)}}(\mathcal{G}^{(1)}_X)$ is the first interesting resolution invariant in this case.

\vspace{0.2cm}

Under these hypotheses $\xi $ is an isolated point of $\mathrm{\underline{Max}\, mult}(X)$, and hence Proposition \ref{prop:sing_small} guarantees that $\Phi _{X,\xi }$ is upper bounded. It turns out that the additional condition on $\tau _{\mathcal{G}_X^{(n)},\xi }$ yields an improvement of that result:

\begin{Prop}\label{thm:upper_bound_bigtau} If $\tau _{\mathcal{G}_X^{(n)},\xi }=n-1$, then for any arc $\varphi $ in $X$ through $\xi $:
$$\bar{r}_{X,\varphi }\leq \mathrm{ord}_{\xi ^{(1)}}(\mathcal{G}_X^{(1)})\mbox{,}$$
and this bound is sharp.
\end{Prop}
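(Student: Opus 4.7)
The plan is to exploit the explicit decomposition (\ref{eq:Gn_tau_grande}) of $\mathcal{G}_X^{(n)}$ to reduce the computation of $r_{X,\varphi}$ to orders in the one-dimensional ring $\mathcal{O}_{V^{(1)},\xi^{(1)}}$. Applying $\varphi$ to (\ref{eq:Gn_tau_grande}), and writing $\varphi^{(1)}:=\varphi\circ(\beta_X^{(1)})^*$ for the induced arc on $V^{(1)}$, the compatibility of $\mathrm{ord}_t$ with the operation $\odot$ (the order of a $\odot$ of Rees algebras is the minimum of the orders of its summands on generators) yields
$$r_{X,\varphi}=\min\bigl\{\mathrm{ord}_t(\varphi(x'_1)),\ldots,\mathrm{ord}_t(\varphi(x'_{n-1})),\,\mathrm{ord}_t(\varphi^{(1)}(\mathcal{G}_X^{(1)}))\bigr\}.$$

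Since $\mathcal{O}_{V^{(1)},\xi^{(1)}}$ is a DVR with uniformizer $z$, every nonzero $f$ factors as $f=uz^{\nu_{\xi^{(1)}}(f)}$ with $u$ a unit, so $\mathrm{ord}_t(\varphi^{(1)}(f))=\nu_{\xi^{(1)}}(f)\cdot\mathrm{ord}(\varphi^{(1)})$; taking the minimum over generators $fW^i$ of $\mathcal{G}_X^{(1)}$ gives the clean identity
$$\mathrm{ord}_t(\varphi^{(1)}(\mathcal{G}_X^{(1)}))=\mathrm{ord}_{\xi^{(1)}}(\mathcal{G}_X^{(1)})\cdot\mathrm{ord}(\varphi^{(1)}).$$
Using $\mathrm{ord}(\varphi)=\min\{\mathrm{ord}_t(\varphi(x'_j)),\,\mathrm{ord}(\varphi^{(1)})\}$, I split into two cases. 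If the minimum defining $\mathrm{ord}(\varphi)$ is attained by $\mathrm{ord}(\varphi^{(1)})$, dividing yields $\bar{r}_{X,\varphi}\leq\mathrm{ord}_{\xi^{(1)}}(\mathcal{G}_X^{(1)})$ at once. If instead $\mathrm{ord}(\varphi)=\mathrm{ord}_t(\varphi(x'_j))$ for some $j$, then $r_{X,\varphi}\leq\mathrm{ord}(\varphi)$ and $\bar{r}_{X,\varphi}\leq 1\leq\mathrm{ord}_{\xi^{(1)}}(\mathcal{G}_X^{(1)})$, where the last inequality holds because $\xi^{(1)}\in\mathrm{Sing}(\mathcal{G}_X^{(1)})$. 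This proves the upper bound.

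For sharpness, I exhibit an arc saturating the bound by mimicking the lifting argument of Proposition \ref{prop:sing_big}. Let $\bar\varphi^{(1)}:\mathcal{O}_{V^{(1)},\xi^{(1)}}\to K[[t]]$ be the arc with $\bar\varphi^{(1)}(z)=t$, so that $\mathrm{ord}_t(\bar\varphi^{(1)}(\mathcal{G}_X^{(1)}))=\mathrm{ord}_{\xi^{(1)}}(\mathcal{G}_X^{(1)})$. Taking $\mathcal{P}=\ker(\bar\varphi^{(1)})$, I pick a prime $\mathcal{Q}\subset\mathcal{O}_{X,\xi}$ dominating $\mathcal{P}$ through $\beta_X^*$ and then a nontrivial arc on the one-dimensional quotient $\mathcal{O}_{X,\xi}/\mathcal{Q}$, obtaining an arc $\varphi$ in $X$ through $\xi$. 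Equality $\bar{r}_{X,\varphi}=\mathrm{ord}_{\xi^{(1)}}(\mathcal{G}_X^{(1)})$ will then follow provided (i) $\mathrm{ord}(\varphi)=\mathrm{ord}(\varphi^{(1)})$ and (ii) $\mathrm{ord}_t(\varphi(x'_j))\geq\mathrm{ord}_{\xi^{(1)}}(\mathcal{G}_X^{(1)})\cdot\mathrm{ord}(\varphi^{(1)})$ for every $j$, so that the first case of the bound applies with the minimum attained by the $\mathcal{G}_X^{(1)}$-term. Geometrically, both conditions express that the lifted arc approaches $\xi$ tangent to the $\tau$-invariant linear direction $\{x'_1=\cdots=x'_{n-1}=0\}$ of the tangent cone of $\mathcal{G}_X^{(n)}$.

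The upper bound is essentially formal from (\ref{eq:Gn_tau_grande}) once one exploits the one-dimensionality of $V^{(1)}$ to see that $\mathrm{ord}_t(\varphi^{(1)}(\mathcal{G}_X^{(1)}))$ scales cleanly with $\mathrm{ord}(\varphi^{(1)})$. The delicate part will be controlling the auxiliary orders $\mathrm{ord}_t(\varphi(x'_j))$ in the sharpness construction; here the hypothesis $\tau_{\mathcal{G}_X^{(n)},\xi}=n-1$ is what makes the geometry rigid enough to force a lift with the required tangential behaviour, and the examples that follow in the last section illustrate this phenomenon explicitly.
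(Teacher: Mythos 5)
Your upper bound argument is essentially the paper's: same decomposition (\ref{eq:Gn_tau_grande}), same identity $\mathrm{ord}_t(\varphi^{(1)}(\mathcal{G}_X^{(1)}))=\mathrm{ord}_{\xi^{(1)}}(\mathcal{G}_X^{(1)})\cdot\mathrm{ord}(\varphi^{(1)})$ from one-dimensionality of $V^{(1)}$, same two cases according to whether $\mathrm{ord}(\varphi)$ is realized on some $x'_j$ or on $z$. (You claim equality where the paper only needs $\leq$, but that is immaterial.)

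The sharpness argument, however, has a genuine gap. You propose to lift an arc from $V^{(1)}$ by taking $\mathcal{P}=\ker(\bar\varphi^{(1)})\subset\mathcal{O}_{V^{(1)},\xi^{(1)}}$ and a prime $\mathcal{Q}\subset\mathcal{O}_{X,\xi}$ ``dominating $\mathcal{P}$ through $\beta_X^*$.'' But $\beta_X^*$ is the finite inclusion $\mathcal{O}_{V^{(d)},\xi^{(d)}}\hookrightarrow\mathcal{O}_{X,\xi}$, not a map from $\mathcal{O}_{V^{(1)},\xi^{(1)}}$; the arrow $X\to V^{(1)}$ factors through the smooth $(d-1)$-dimensional fibration $V^{(d)}\to V^{(1)}$ and is not finite. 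Since $\mathcal{O}_{V^{(1)},\xi^{(1)}}$ is a DVR, your $\mathcal{P}=(0)$, and any $\mathcal{Q}$ lying over $(0)$ in the finite extension $\mathcal{O}_{V^{(d)},\xi^{(d)}}\hookrightarrow\mathcal{O}_{X,\xi}$ has height $0$, so $\mathcal{O}_{X,\xi}/\mathcal{Q}$ is $d$-dimensional rather than one-dimensional, and the construction of a single arc collapses. This is not a matter of the ``$\tau$ hypothesis making the geometry rigid''; a lift from $V^{(1)}$ alone gives you no handle on the orders $\mathrm{ord}_t(\varphi(x'_j))$, and indeed no canonical lift at all. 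The paper avoids this by lifting from $V^{(d)}$, where $\beta_X$ \emph{is} finite: it chooses $\bar\varphi^{(d)}$ with $\bar\varphi^{(d)}(z_d)=t$ and $\bar\varphi^{(d)}(z_j)=t^a$ for $j<d$ with $a>\mathrm{ord}_{\xi^{(1)}}(\mathcal{G}_X^{(1)})$, so the $z_j$-directions are pushed to high order by design; the $x_i$-directions are then handled for free by Lemma \ref{lemma:x_1_not_important}(2), which guarantees $\mathrm{ord}_t(\varphi(x_i))\geq\mathrm{ord}_t(\varphi^{(d)}(\mathcal{G}_X^{(d)}))$ for any lift. Your conditions (i) and (ii) need to be produced by an explicit choice of arc in $V^{(d)}$ (plus that lemma), not asserted as consequences of $\tau_{\mathcal{G}_X^{(n)},\xi}=n-1$.
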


\begin{proof}
We may assume first that $\mathrm{min}_{i=1,\ldots ,n-1}\left\{ \mathrm{ord}_t(\varphi (x'_i)) \right\} =\mathrm{ord}_t(\varphi (x'_1))$. By (\ref{eq:Gn_tau_grande}), we obtain
\begin{equation}
r_{X,\varphi }\leq \mathrm{min}\left\{ \mathrm{ord}_t(\varphi (x'_1)), \mathrm{ord}_{t}(\varphi ^{(1)}(\mathcal{G}^{(1)}_X)) \right\} \mbox{,}
\end{equation}
where $\varphi ^{(1)}$ is the projection of $\varphi $ via the elimination map $\beta _X^{(1)}: \mathrm{Spec}(R) \longrightarrow \mathrm{Spec}(S')$. Note that, either $\mathrm{ord} (\varphi )=\mathrm{ord}_t(\varphi (x'_1))$ or $\mathrm{ord} (\varphi )=\mathrm{ord}_t(\varphi (z))$. In the first case, 
$$1\leq \bar{r}_{X,\varphi }\leq \frac{\mathrm{min}\left\{ \mathrm{ord}_t(\varphi (x'_1)), \mathrm{ord}_{t}(\varphi ^{(1)}(\mathcal{G}^{(1)}_X))  \right\}}{\mathrm{ord}_t(\varphi (x'_1))}\leq 1\mbox{,}$$
which implies that 
$$ \bar{r}_{X,\varphi }=1<\mathrm{ord}_{\xi ^{(1)}}(\mathcal{G}_X^{(1)})$$
In the second case,
$$\bar{r}_{X,\varphi }\leq \frac{\mathrm{ord}_{t}(\varphi ^{(1)}(\mathcal{G}^{(1)}_X))}{\mathrm{ord}_t(\varphi (z))}\mbox{.}$$
Note that $\mathrm{ord}_{t}(\varphi ^{(1)}(\mathcal{G}_{X}^{(1)}))\geq \mathrm{ord}_{\xi ^{(1)}}(\mathcal{G}_{X}^{(1)})\cdot \mathrm{ord}_t(\varphi (z))$ (see \cite[Lemma 4.1.6]{Br_E_P-E}). But actually this inequiality is an equality here. This follows from the fact that $\mathcal{G}_X^{(1)}\subset S'[W]$ so, for all $gW^l\in \mathcal{G}_X^{(1)}$, we have that $\mathrm{ord}_t(\varphi (g))=\mathrm{ord}_z(g)\cdot \mathrm{ord}_t(\varphi (z))$. One only needs to observe now that $\varphi ^{(1)}(\mathcal{G}_{X}^{(1)})=K[[t]][\varphi (g)W^{l}: gW^l\in \mathcal{G}_X^{(1)}]$, and the equality is clear. Hence
$$\bar{r}_{X,\varphi }\leq \frac{\mathrm{ord}_{\xi ^{(1)}}(\mathcal{G}_{X}^{(1)})\cdot \mathrm{ord}_t(\varphi (z))}{\mathrm{ord}_t(\varphi (z))} =\mathrm{ord}_{\xi ^{(1)}}(\mathcal{G}_{X}^{(1)}) \mbox{.}$$

To see that $\mathrm{ord}_{\xi ^{(1)}}(\mathcal{G}_X^{(1)})$ is a sharp bound, consider an r.s.p. $\left\{ x_1,\ldots ,x_{n-d},z_1,\ldots ,z_{d-1},z_d\right\} \subset R$ as in Section \ref{sec:notation}. Since $\tau _{\mathcal{G}_X^{(n)},\xi }=n-1$, we may assume that $x_1W,\ldots ,x_{n-d}W,z_1W,\ldots ,z_{d-1}W\in \mathcal{G}_{X}^{(n)}$. We may choose an arc $\bar{\varphi }^{(d)}$ in $V^{(d)}$ through $\beta _X(\xi )$ such that  $\bar{\varphi }^{(d)}(z_d)=t$ and $\bar{\varphi }^{(d)}(z_1)=\ldots =\bar{\varphi }^{(d)}(z_{d-1})=t^a$, for some $a\in \mathbb{Z}_{>0}$, $a> \mathrm{ord}_{\xi ^{(1)}}(\mathcal{G}_X^{(1)})>1$. This arc can be lifted to an arc $\varphi $ in $X$ through $\xi $, for which
$$\bar{r}_{X,\varphi }=\frac{\mathrm{ord}_{t}(\varphi ^{(d)}(\mathcal{G}_X^{(d)}))}{\mathrm{ord}(\varphi )}\geq \frac{\mathrm{ord}_{t}(\varphi ^{(d)}(\mathcal{G}_X^{(d)}))}{\mathrm{ord}(\varphi ^{(d)})}=\frac{\mathrm{min}\left\{ \mathrm{ord}_t(\varphi ^{(d)}(z_1)),\ldots ,\mathrm{ord}_t(\varphi ^{(d)}(z_{d-1})),\mathrm{ord}_{t}(\varphi ^{(1)}(\mathcal{G}_X^{(1)})) \right\} }{\mathrm{ord}(\varphi ^{(d)})}$$
by Lemma \ref{lemma:x_1_not_important} and  (\ref{eq:orders_arcs}), where $\varphi ^{(d)}=\varphi \circ \beta _X^*$ and $\varphi ^{(1)}=\varphi \circ (\beta _X^{(1)})^*$. Also, $\mathrm{Ker}(\varphi ^{(d)})=\mathrm{Ker}(\bar{\varphi }^{(d)})=(z_d^a-z_1,\ldots ,z_d^a-z_{d-1})$, so for $i=1,\ldots ,d-1$ it is clear that  
$$\mathrm{ord}_t(\varphi ^{(d)}(z_i))=a\cdot \mathrm{ord}_t(\varphi ^{(d)}(z_d))> \mathrm{ord}_{\xi ^{(1)}}(\mathcal{G}_X^{(1)})\cdot \mathrm{ord}_t(\varphi ^{(d)}(z_d))=\mathrm{ord}_{t}(\varphi ^{(1)}(\mathcal{G}_X^{(1)}))>\mathrm{ord}_t(\varphi ^{(d)}(z_d))\mbox{.}$$
Thus,
$$\mathrm{ord}_{\xi ^{(1)}}(\mathcal{G}_{X}^{(1)})\geq \bar{r}_{X,\varphi }\geq  \frac{\mathrm{ord}_{t}(\varphi ^{(1)}(\mathcal{G}_X^{(1)}))}{\mathrm{ord}_t(\varphi ^{(d)}(z_d))}=\mathrm{ord}_{\xi ^{(1)}}(\mathcal{G}_X^{(1)})\mbox{.}$$

\end{proof}

\vspace{0.2cm}

However, under the hypothesis of Proposition \ref{thm:upper_bound_bigtau}, sometimes it is possible to find arcs such that $\mathrm{ord}_{\xi }(\mathcal{G}_X^{(d)})=1<\bar{r}_{X,\varphi }<\mathrm{ord}_{\xi }(\mathcal{G}_{X}^{(1)})$. Let us show an example for this:

\vspace{0.2cm}

\begin{Ex}
Consider $X\hookrightarrow \mathrm{Spec}(k[x,y,z])$ defined by the equation $f=xy-z^5$ and $\xi =(0,0,0)=\mathrm{\underline{Max}\, mult}(X)$, and let $\varphi $ be the arc defined by $\varphi (x)=t^3$, $\varphi (y)=t^2$, $\varphi (z)=t$. Here
$$\mathcal{G}_X^{(3)}=\mathrm{Diff}(k[x,y,z][fW^2])=k[x][xW]\odot \mathcal{G}^{(2)}_X=k[x,y][xW,yW]\odot \mathcal{G}_X^{(1)}\mbox{,}$$
where $\mathcal{G}_X^{(2)}=k[y,z][yW,z^5W^2,z^4W]$ and $\mathcal{G}_X^{(1)}=k[z][z^5W^2,z^4W]$, so $\mathrm{ord}_{\xi ^{(d)}}(\mathcal{G}_X^{(2)})=1$ and $\mathrm{ord}_{\xi ^{(1)}}(\mathcal{G}_X^{(1)})=5/2$. Note that $\mathrm{ord}(\varphi )=\mathrm{ord}_{t}(\varphi (z))=1$. On the other hand, $$r_{X,\varphi }=\mathrm{ord}_{t}(\varphi (\mathcal{G}_X^{(3)}))=\mathrm{ord}_{t}(\varphi ^{(2)}(\mathcal{G}_X^{(2)}))=\mathrm{min}\left\{ \mathrm{ord}_t(\varphi ^{(2)}(y)),\mathrm{ord}_{t}(\varphi ^{(1)}(\mathcal{G}_X^{(1)})) \right\} = \mathrm{min}\left\{ 2,5/2 \right\} =2\mbox{.}$$
Hence, for this example $1<\bar{r}_{X,\varphi }=2<5/2$.
\end{Ex}

\vspace{0.2cm}

Let us end our discussion with a couple of illustrative examples for Propositions \ref{prop:sing_small} and \ref{prop:sing_big} respectively. The first one shows an isolated point of $\mathrm{\underline{Max}\, mult}(X)$ for which $\Phi _{X,\xi }$ is upper bounded by $3$:
\vspace{0.2cm}

\begin{Ex}
Let $X=\left\{ x^2y^3-z^3s^4=0 \right\} \hookrightarrow \mathrm{Spec}(k[x,y,z,s])$ and let $\xi =(0,0,0,0)=\mathrm{\underline{Max}\, mult}(X)$. We have
\begin{align*}
&\mathcal{G}_X^{(4)}=\mathrm{Diff}(k[x,y,z,s][(x^2y^3-z^3s^4)W^5])=\\
=k[x,y,z,s][xW,yW,zsW & ,z^3W,s^2W, z^3sW^2,zs^2W^2,zs^4W^3,z^2s^3W^3,z^3s^2W^3,z^3s^3W^4,z^3s^4W^5]\mbox{.}
\end{align*}
Observe now that $xW,yW,z^3W,s^2W\in \mathcal{G}_X^{(4)}$, and $\varphi (x)W,\varphi (y)W,\varphi (z)^3W,\varphi (s)^2W \in \varphi (\mathcal{G}_X^{(4)})$. Then
$$r_{X,\varphi }\leq \mathrm{min}\left\{ \mathrm{ord}_t(\varphi (x)), \mathrm{ord}_t(\varphi (y)), 3\cdot \mathrm{ord}_t(\varphi (z)), 2\cdot \mathrm{ord}_t(\varphi (s)) \right\} \mbox{.}$$
If $\mathrm{ord}(\varphi )=\mathrm{ord}_t(\varphi (x))$ or $\mathrm{ord}(\varphi )=\mathrm{ord}_t(\varphi (y))$, then $\bar{r}_{X,\varphi }=1$. If $\mathrm{ord}(\varphi )=\mathrm{ord}_t(\varphi (z))$, then $\bar{r}_{X,\varphi }\leq 3$, and if $\mathrm{ord}(\varphi )=\mathrm{ord}_t(\varphi (s))$, then $\bar{r}_{X,\varphi }\leq 2$. In any case 
$$\bar{r}_{X,\varphi }\leq 3\mbox{.}$$
\end{Ex}

\vspace{0.2cm}

In our next example we construct, for a non isolated point of $\mathrm{\underline{Max}\, mult}(X)$, a family of arcs $\varphi _N$, $N\in \mathbb{Z}_{>0}$, for which $\bar{r}_{X,\varphi _N}$ equals a polynomial in $N$, namely $q(N)=N+2$, showing that $\Phi _{X,\xi }$ is not upper bounded:

\vspace{0.2cm}

\begin{Ex}
Let now $X=\left\{ x^2y^3-z^4s^5=0 \right\} $, and let $\xi =(0,0,0,0)$ again. Now $\xi \subsetneq \mathrm{\underline{Max}\, mult}(X)$. In this case,
$$\mathcal{G}_X^{(4)}=\mathrm{Diff}(k[x,y,z,s][(x^2y^3-z^4s^5)W^5])=k[x,y,z,s][xW,yW,zsW,s^5W,zs^5W^2,z^2s^5W^3,z^3s^5W^4, z^4s^5W^5]\mbox{.}$$
Consider the following family of arcs through $\xi $ parametrized by $N\in \mathbb{Z}_{>0}$:
\begin{align*}
\varphi _N:k[x,y,z,s]/(x^2y^3-z^4s^5) & \longrightarrow K[[t]]\\
x & \longmapsto t^{2N+2}\mbox{,}\\
y & \longmapsto t^{2N+5}\mbox{,}\\
z & \longmapsto t\mbox{,}\\
s & \longmapsto t^{2N+3}\mbox{.}
\end{align*}
Now 
$$\varphi (\mathcal{G}_X^{(4)})=K[[t]][t^{2N+2}W]$$
and $\mathrm{ord}(\varphi _N)=1$, so
$$\bar{r}_{X,\varphi }=2N+2\mbox{,}$$
which grows with $N$.
\end{Ex}

\end{document}